\newtheorem{thm}{Theorem}[section]
\newtheorem{conj}{Conjecture}[section]
\newtheorem*{definition*}         {Definition}
\newtheorem{lemma}[thm]{Lemma}
\newtheorem{cor}[thm]{Corollary}
\theoremstyle{remark}
\newcommand*{\Q}{\mathbb{Q}}
\newcommand*{\Hh}{\mathbb{H}}
\newcommand*{\Qa}{\overline{\mathbb{Q}}}
\newcommand*{\Z}{\mathbb{Z}}
\newcommand*{\G}{\mathbb{G}}
\newcommand*{\A}{\mathcal{A}}
\newcommand*{\R}{\mathbb{R}}
\newcommand*{\C}{\mathbb{C}}
\newcommand*{\DT}{\mathbb{S}}
\newcommand*{\Disc}{\textrm{Disc}}
\def\Sp{{\rm Sp}}
\author{Jonathan Pila and Jacob Tsimerman
%\\ Accepted for publication in Annals of Mathematics
}
\address{Mathematical Institute, University of Oxford, UK}
\address{Department of Mathematics, Harvard University, USA}
\begin{document}

\centerline{Accepted for publication in the {\it Annals of Mathematics}}

\bigskip

\begin{abstract}
We prove the Ax-Lindemann theorem for the coarse moduli space
$\mathcal{A}_{g}$ of principally polarized abelian varieties of
dimension $g\ge 1$. We affirm the Andr\'e-Oort conjecture
unconditionally for $\A_g$ for
$g\le 6$, and under GRH for all $g$.
\end{abstract}

\title{Ax-Lindemann For $\A_g$}

\maketitle

\section{introduction}

In this paper we prove the ``Ax-Lindemann'' theorem for
$\A_g=\A_{g,1}, g\ge 1$,
the moduli space of principally polarized
Abelian varieties of dimension $g$. The statement of the theorem is as follows
(for definitions and conventions see \S2).
Let $\Hh_g$ be the Siegel upper-half space and
$\pi_{g}: \Hh_g\rightarrow\A_g$
the ${\rm Sp}_{2g}(\Z)$-invariant uniformisation.

\begin{thm}\label{1.1}
Let $V\subset \A_g$ be a subvariety and
$W\subset\pi_{g}^{-1}(V)$  a maximal algebraic subvariety.
Then $W$ is weakly special.
\end{thm}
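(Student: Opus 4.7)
The approach is the Pila--Zannier strategy, previously used for Ax--Lindemann over modular curves, products of modular curves, and compact Shimura varieties. It combines three ingredients:
(i) $o$-minimal definability of the uniformisation restricted to a Siegel fundamental set (due to Peterzil--Starchenko);
(ii) the Pila--Wilkie counting theorem in the $o$-minimal structure $\R_{\mathrm{an},\exp}$;
(iii) a geometric point-production step using the explicit structure of a Siegel fundamental domain.

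First I would fix a Siegel fundamental set $\mathcal{F}\subset\Hh_g$ for $\Gamma:=\Sp_{2g}(\Z)$. By Peterzil--Starchenko the restriction $\pi_{g}|_{\mathcal{F}}$ is definable in $\R_{\mathrm{an},\exp}$. After possibly translating $W$ by an element of $\Gamma$, assume $W$ meets $\mathcal{F}$. Introduce the definable set
$$D(W) := \{\, g\in\Sp_{2g}(\R) : \dim\bigl(gW\cap\mathcal{F}\cap \pi_{g}^{-1}(V)\bigr)=\dim W\,\}.$$
For $\gamma\in\Gamma\cap D(W)$ we then have $\gamma W\subset\pi_{g}^{-1}(V)$, and by maximality of $W$ this forces $\gamma W=W$. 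Hence every such $\gamma$ lies in $\mathrm{Stab}_{\Gamma}(W)$, and in particular its image in $\Sp_{2g}(\R)/\mathrm{Stab}(W)$ is trivial.

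The core step is to produce many such $\gamma$: using volume comparisons inside $\Hh_g$, the number of $\gamma\in\Gamma$ of height $\le T$ with $\gamma W\cap\mathcal{F}\neq\emptyset$ should grow at least polynomially in $T$. If $W$ were not weakly special, the identity component of its real stabiliser $\mathrm{Stab}(W)^{0}$ would act with orbits of dimension strictly less than $\dim W$; the Pila--Wilkie theorem, applied to the image of $D(W)$ in $\Sp_{2g}(\R)/\mathrm{Stab}(W)^{0}$, would then force at most $O(T^{\varepsilon})$ integer points of height $\le T$, contradicting the polynomial lower bound. Therefore $D(W)$ must contain a connected real semi-algebraic arc through the identity transverse to $\mathrm{Stab}(W)^{0}$; its Zariski closure in $\Sp_{2g}$ produces a positive-dimensional real algebraic subgroup $H\subseteq\Sp_{2g}(\R)$ with $H\cdot W\subset\pi_{g}^{-1}(V)$. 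By maximality $H\cdot W=W$, so $H\subseteq\mathrm{Stab}(W)$, strictly enlarging it.

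Iterating this argument shows that $\mathrm{Stab}(W)^{0}$ must act transitively on $W$. A standard Borel--Tits/Mumford--Tate argument then identifies the smallest $\Q$-algebraic subgroup of $\Sp_{2g}$ whose real points contain $\mathrm{Stab}(W)^{0}$, and shows that $W$ is (a component of) the $\Hh_g$-preimage of a weakly special subvariety of $\A_g$. The main obstacle is the counting step: since $\A_g$ is non-compact, one must control $W$ near the cusps of $\mathcal{F}$, so the required volume/height bound and the implementation of Pila--Wilkie both demand delicate estimates on Hermite-reduced period matrices near the boundary. This boundary analysis, together with the definability of $\pi_g|_{\mathcal{F}}$ through the cusps, is the essential technical novelty beyond the compact Shimura case.
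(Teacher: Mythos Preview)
Your overall architecture is right, and the counting/volume heuristics are on target, but there is a genuine gap at the maximality step which breaks the Pila--Wilkie application as you have set it up.

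You claim that for $\gamma\in\Gamma\cap D(W)$ one has $\gamma W=W$ ``by maximality of $W$''. This is false: maximality says only that no strictly larger irreducible algebraic subvariety of $\pi_g^{-1}(V)$ contains $W$. The translate $\gamma W$ is another maximal algebraic subvariety of $\pi_g^{-1}(V)$, of the same dimension, but there is no reason it should coincide with $W$. (Already when $V$ is special the components of $\pi_g^{-1}(V)$ are permuted by $\Gamma$, and only the monodromy subgroup fixes a given one.) So the integer points of $D(W)$ do \emph{not} all lie in $\mathrm{Stab}(W)$, and the quotient argument you sketch collapses. The correct order is the reverse: first apply the \emph{block} form of Pila--Wilkie to $D(W)$, using the polynomial lower bound on $\#\{\gamma\in\Gamma\cap D(W):H(\gamma)\le T\}$ (which is indeed obtained by comparing an upper bound for the hyperbolic volume of a curve in $W$ inside a fundamental set with a lower bound for its volume near $\partial\Hh_g$). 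This yields a connected positive-dimensional semi-algebraic block $B\subset D(W)$ containing some $\gamma\in\Gamma$. \emph{Then} translate: $\gamma^{-1}B$ passes through the identity, $\gamma^{-1}B\cdot W$ is a connected semi-algebraic subset of $\pi_g^{-1}(V)$ containing $W$, and maximality now legitimately gives $\gamma^{-1}B\cdot W=W$, hence $\gamma^{-1}B\subset\mathrm{Stab}(W)$ and the stabiliser has positive dimension.

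There is a second, structural gap. Once $\mathrm{Stab}(W)^0$ is positive-dimensional, your ``iterate and then invoke Borel--Tits/Mumford--Tate'' is too optimistic. The paper's route is: pass to the largest connected $\Q$-subgroup $H$ inside the stabiliser (nontrivial because the stabiliser contains infinitely many integer points); if the component $Y$ of $\pi_g^{-1}(V)$ containing $W$ is not $H(\R)^+$-invariant, a rational $h\in H(\Q)^+$ produces a Hecke translate with $W\subset Y\cap hY\subsetneq Y$, contradicting a minimality hypothesis on $V$; hence $Y$ is $H(\R)^+$-invariant, and after replacing $H$ by the subgroup generated by its monodromy-conjugates one obtains a \emph{normal} $\Q$-subgroup of the monodromy group $G_0$. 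This forces a splitting of $G_0^{\rm ad}$ and of the associated hermitian domain, and the proof finishes by induction on $\dim V$ applied to the complementary factor. None of this is ``standard'': the interplay between the $\Q$-structure, the Hecke/minimality argument, and the induction on $\dim V$ is the heart of the proof and cannot be replaced by a transitivity argument carried out purely over $\R$.
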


As explained in \cite{PILAOAO}, this theorem may be viewed
as an analogue for the map $\pi_g$ of
part of Ax's theorem \cite{AX} establishing the differential field version
of Schanuel's
conjecture for the exponential function  (see \cite{LANG}, p30),
namely the part
which corresponds to the classical
Lindemann (or Lindemann-Weierstrass) theorem (see \cite{LANG}, p77).
Hence the neologism ``Ax-Lindemann''.
The proof of Theorem 1.1 combines various arithmetic
estimates with the Counting Theorem of Pila-Wilkie \cite{PW}
and with the idea of Ullmo-Yafaev \cite{UYHYPERBOLIC}
to use hyperbolic volume at the boundary.

The Andr\'e-Oort conjecture (AO) is a compositum of conjectures made
by Andr\'e \cite{ANDRE} and Oort \cite{OORT}. A full proof of AO
under the assumption of GRH for CM fields
has been announced by Klingler, Ullmo, and Yafaev
\cite{KYAO, UYAO}.
An appropriate Ax-Lindemann theorem
is a key ingredient in proving cases of AO
unconditionally using o-minimality and point-counting
\cite{DAWYAFAEV, PILAOAO, PT, ULLMO},
following the basic strategy originally proposed by Zannier for
re-proving the Manin-Mumford conjecture \cite{PZ}. It provides a
geometric characterisation of the exceptional set in the
Counting Theorem, and in this role is analogous to functional
transcendence statements in the context
of other Zilber-Pink type problems (e.g. \cite{MZ, HP}).\/
The following theorem affirms AO unconditionally for $\A_g$ for
$g\le 6$, and for all $g$ under the assumption of GRH.

\begin{thm}\label{1.2}
Let $V\subset\A_g, g\le 6$ be a subvariety. Then $V$ contains only finitely many maximal special subvarieties.
Under the assumption of GRH (for CM fields)
the same conclusion holds for all $g$.
\end{thm}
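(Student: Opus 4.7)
The plan is to run the Pila--Zannier strategy on $\A_g$, using Theorem~\ref{1.1} as the functional transcendence ingredient. After reducing to the case where $V\subsetneq\A_g$ is irreducible, Hodge-generic in the smallest special subvariety of $\A_g$ containing it, and contains an infinite collection $\{W_i\}_i$ of maximal special subvarieties, it is enough to derive a contradiction. Fixing a Siegel fundamental set $\mathcal{F}_g\subset\Hh_g$, the set $Z:=\pi_g^{-1}(V)\cap\mathcal{F}_g$ is definable in the o-minimal structure $\R_{\mathrm{an},\exp}$ by a theorem of Peterzil--Starchenko, and each $W_i$ lifts to an algebraic subvariety of $\mathcal{F}_g$ of controlled degree and height, encoded by a single complexity $\Delta(W_i)$.

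Next, I invoke a Galois orbit lower bound: the $\GQ$-orbit of $W_i$ has size $\gg\Delta(W_i)^{\delta}$ for some absolute $\delta>0$, its members are again maximal special subvarieties of $V$, and their lifts all live in $Z$ with complexity polynomially bounded in $\Delta(W_i)$. This is the step in which the hypothesis of Theorem~\ref{1.2} actually enters: the required bound rests on lower bounds for Faltings heights of CM abelian varieties (essentially the averaged Colmez conjecture together with a Brauer--Siegel type estimate), which are known unconditionally for $g\le 6$ and in general only under GRH for CM fields.

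Feeding this abundance of ``arithmetic'' algebraic subvarieties in $Z$ into the Pila--Wilkie counting theorem produces, once $\Delta(W_i)$ is large enough, a positive-dimensional semi-algebraic family of blocks in $Z$ covering many of the Galois conjugate lifts. Enlarging a generic such block to a maximal algebraic subvariety $\widetilde W\subseteq\pi_g^{-1}(V)$ and applying Theorem~\ref{1.1}, $\widetilde W$ must be weakly special, so its image $W:=\pi_g(\widetilde W)\subseteq V$ is a positive-dimensional weakly special subvariety of $V$ containing infinitely many of the $W_i$. A standard rigidity argument in the spirit of Ullmo--Yafaev then shows that a weakly special subvariety of $\A_g$ containing infinitely many special subvarieties of a fixed type is itself special; hence $W\subseteq V$ is a special subvariety of $V$ strictly containing some of the maximal $W_i$, contradicting maximality.

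The principal obstacle is the Galois orbit lower bound in the second step. The other ingredients --- definability of $\pi_g|_{\mathcal{F}_g}$, Pila--Wilkie, and Theorem~\ref{1.1} --- work uniformly in $g$; it is only the state of knowledge on lower bounds for Faltings heights of CM abelian varieties that confines the unconditional range to $g\le 6$ and forces the assumption of GRH for CM fields when treating general $g$.
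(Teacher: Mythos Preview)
Your outline follows the Pila--Zannier strategy in spirit, but there is a genuine gap at the place you call ``a standard rigidity argument,'' and a related misidentification of where the Galois input is used.

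The available Galois lower bound (Conjecture~7.1, established in \cite{JT} for $g\le 6$ and under GRH in general) concerns special \emph{points}: it compares $|{\rm Gal}(\overline{\Q}/\Q)\cdot x|$ with $|{\rm Disc}(R_x)|$. There is no analogous bound in the literature for the Galois orbit of a positive-dimensional special subvariety in terms of a ``complexity'' $\Delta(W_i)$ of the kind you invoke; the lifts of the $W_i$ to $\Hh_g$ are not parametrised by rational points of bounded height to which Pila--Wilkie can be directly applied. (Your aside about the averaged Colmez conjecture is also anachronistic here; the input is Tsimerman's direct bound, not Colmez.) If instead the $W_i$ are special points, your argument, together with the height upper bound of \cite{PT} and Theorem~\ref{ALW}, does show that all but finitely many special points of $V$ lie in \emph{some} positive-dimensional special subvariety of $V$. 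But this is exactly where the hard part begins, not ends: it remains to prove that $V$ has only finitely many \emph{maximal} positive-dimensional special subvarieties. Pila--Wilkie gives you a block containing a conjugate $x'$; Ax--Lindemann upgrades it to a weakly special $W\subset V$ through $x'$; since $x'$ is special, $W$ is special. But nothing forces $W$ to contain, let alone strictly contain, any of the original $W_i$, so no contradiction with maximality is obtained.

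The paper closes this gap by an argument of a quite different flavour. For each connected semi-simple $\Q$-subgroup $F\subset G$ in a class $\mathcal{C}$ one studies the normaliser $N(F)$, shows it is reductive (Lemma~7.2), and that the special subvarieties arising from $N(F)_{\rm sh}$-orbits form a finite set (Lemma~7.3). A case split on whether $N(F)_{\rm sh}=G$ reduces, by induction on $\dim V$, the set $S(F)$ of special $F$-orbits inside $V$ to a finite union of specials. One then proves that the union of \emph{all} positive-dimensional weakly special subvarieties of $V$ is simultaneously a \emph{countable} union of closed subvarieties (Lemma~\ref{CountVar}) and a \emph{definable} set (Lemma~\ref{defweak}); a short o-minimality argument (Lemma~7.6) then collapses the countable union to a finite one, giving a finite set $\mathcal{V}\subset\mathcal{C}$ with every maximal positive-dimensional special contained in $\bigcup_{F\in\mathcal{V}}S(F)$. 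This definability/countability step is the substantive content your ``standard rigidity argument'' is hiding, and it is not a consequence of Ax--Lindemann alone.
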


Both theorems rely on the definability in the o-minimal structure
$\R_{\rm an,\ exp}$ of the map $\pi_{g}:\Hh_g\rightarrow\A_g$
when restricted to a standard fundamental
domain for the ${\rm Sp}_{2g}(\Z)$ action.
This result, stated in \S2.7, is due to Peterzil-Starchenko \cite{PEST}.
The o-minimality of $\R_{\rm an,\ exp}$ is due to
van den Dries and Miller \cite{DM}, building on
the fundamental work of Wilkie \cite{WILKIE}; for further references
on o-minimality see \S2.

The restriction to $g\le 6$ for the unconditional statement in
1.2 is due to another ingredient which is
crucial to the strategy: a suitable lower bound for the size of the
Galois orbit of  a special point.
These have been established by the second author \cite{JT}
unconditionally for $g\le 6$, and for all $g$ on GRH
(the former for $g\le 3$ and the latter were shown
independently by Ullmo-Yafaev \cite{UYGB}).
However, we show that such bounds are the only remaining obstacle
to proving AO for $\A_g$ in general. This gives a new proof of AO for
$ \A_g$ for all $g$ assuming GRH, by different methods to the ones
employed by Klingler-Ullmo-Yafaev \cite{KYAO, UYAO}.

In the course of preparation of this manuscript, the
preprint \cite{ULLMO} by Ullmo  appeared showing how to deduce AO
from these ingredients (Ax-Lindemann, height upper bound for
preimages of special points, Galois lower bounds, and
uniformisation with suitable definability)
for any Shimura variety. Ullmo thereby proves (in \cite{ULLMO}) AO unconditionally
for all projective Shimura subvarieties of $\A_6^n, n\ge 1$, using the
Ax-Lindemann theorem for projective Shimura varieties established
by Ullmo-Yafaev \cite{UYHYPERBOLIC}, the height upper bounds in \cite{PT},
and the Galois lower bounds of \cite{JT},
as the definability requirements are easily seen to be satisfied (in $\R_{\rm an}$)
in the case of a projective Shimura variety.
The deductions of AO in \cite{ULLMO} and here
differ in detail but both depend on the structure of
weakly special subvarieties of $\A_g$ and make further crucial use of o-minimality
(as in \cite{PILAOAO, PT}).
We have retained our treatment in order to keep our paper self-contained.

We begin in \S2 by reviewing $\Hh_g$ and our basic definitions and conventions.
Then we prove some norm estimates in \S3,
and estimates about volumes of curves in fundamental domains (\S4)
and near the boundary of $\Hh_g$  (\S5).
With these preparations we prove 1.1 in \S6.
The deduction of AO from 1.1 and the various
other ``ingredients'' is carried out in \S7.

\bigskip
\noindent
{\bf Acknowledgements.\/} We thank Martin Orr, Kobi Peterzil, and
Sergei Starchenko for their comments. We are extremely
grateful to  the referees for their thorough reading of earlier
versions of this paper and their detailed comments and corrections.

\section{Basic conventions and notation}

\subsection{$\Sp_{2g}(\R)$ and $\Hh_g$}

The {\it symplectic group\/} $\Sp_{2g}(R)$ with entries in a ring $R$ is
the group of matrices $T\in M_{2g}(R)$ satisfying
$$
TJT^t=J
$$
where $J=\begin{pmatrix}0& 1_g\cr -1_g&0\end{pmatrix}$ is the standard
alternating matrix of degree
$2g$, and $T^t$ is the transpose of $T$; see e.g. \cite{IGUSA}. If we write
$$
T=\begin{pmatrix} A & B \\ C & D\end{pmatrix},\quad
A,B,C,D\in M_g(R)
$$
then the condition $TJT^t=J$ is equivalent (see \cite{GEER}, p183) to
$$
AB^t=BA^t,\quad CD^t=DC^t,\quad AD^t-BC^t=I_g.
$$
We know (see \cite{GEER}, p184) that
$$\begin{pmatrix} A & B \\ C & D\end{pmatrix}\in \Sp_{2g}(R)\Rightarrow
A^tC=C^tA, B^tD=D^tB,$$  so that $\Sp_{2g}(R)$ is closed under transposition.

The {\it Siegel upper half-space $\Hh_g$\/} is defined to be
$$\Hh_g = \left\{Z\in M_g(\C): Z=Z^t, {\rm Im}(Z)>0 \right\}.$$
Thus $\Hh_g$ is an open domain in the space
$M_{g}(\C)^{\rm sym}$ of symmetric
complex $g\times g$ matrices, which we may identify with $\C^{g(g+1)/2}$.
There is an action of $\Sp_{2g}(\R)$ on $\Hh_g$ given by
$$\begin{pmatrix} A & B \\ C & D\end{pmatrix}Z = (AZ+B)(CZ+D)^{-1}.$$
Denote by $\A_g$ the coarse moduli space of complex principally
polarized abelian varieties of dimension $g$.
The quotient $\Sp_{2g}(\Z)\backslash\Hh_g$ is isomorphic to $\A_g$.
We write $\pi_g:\Hh_g\rightarrow\A_g$ for the projection map, which
may be given explicitly by scalar Siegel modular forms of suitable weight;
see \cite{GEER}, \S10 and \S11.

\subsection{Varieties and subvarieties}

We identify varieties with their sets of complex-valued points. By a
{\it subvariety\/} $V$ of a quasiprojective variety\/ $W$ we mean
a Zariski closed subset. An {\it irreducible algebraic subvariety\/}
of $\Hh_g$, considered as a subset of $\C^{g(g+1)/2}$,
means an irreducible
component  (as a complex analytic variety) of $\Hh_g\cap Y$
where $Y\subset\C^{g(g+1)/2}$ is a subvariety.
Likewise, if $\Hh$ is any hermitian domain, then there is a
natural algebraic variety $\Hh_0$ of which
$\Hh$ is an open subset (see \cite{UYS}), and we define
an irreducible algebraic subvariety of $\Hh$ to be an
irreducible component, as a complex analytic variety,
of $\Hh\cap Y$, where $Y$ is an algebraic subvariety
of $\Hh_0$. For $A\subset \Hh_g$, a
{\it maximal algebraic subvariety\/} of $A$
is an irreducible algebraic subvariety $W\subset\Hh_g$ with
$W\subset A$
such that if $W'\subset\Hh_g$ is an irreducible algebraic subvariety with
$W\subset W'\subset A$ then $W=W'$.

\subsection{The metric}

There is an invariant metric on $\Hh_g$ for the action of $\Sp_{2g}(\R)$,
and this is given (see e.g. \cite{SIEGEL}, p17) by

$$d\mu(Z)={\rm Tr\/} (Y^{-1}dZY^{-1}d\bar{Z}).$$

\subsection{Shimura data and special subvarieties}

We gather here some basic facts about Shimura varieties;
for a more thorough account with proofs see \cite{D1} or \cite{D2}.
We work exclusively with
connected Shimura varieties, of which
an excellent account can be found in \cite{P}.

Define the Deligne torus $\DT$ be the real torus given by Weil
restricting $\G_m$ from $\C$ to $\R$. Thus, the real points
$\DT(\R)$ can be identified with $\C^{\times}$.
A \emph{connected Shimura datum\/}
is defined to be a pair $(G,X^{+})$ where $G$ is a reductive
group over $\Q$  and $X^{+}$ is a connected component of a $G(\R)$-conjugacy class $X$ of homomorphisms
$h:\DT\rightarrow G_{\R}$ satisfying the following conditions:
\begin{itemize}
\item For all $h\in X$, only the weights $(0,0),(1,-1),(-1,1)$
may occur in the
adjoint action of $\DT$ on the complexified Lie algebra of $G$
\item ${\rm ad}\circ h(i)$ is a Cartan involution on the adjoint group
$G^{\rm ad}_{\R}$    and
\item $G^{\rm ad}$ has no simple $\Q$-factor $H$ such that
$H(\R)$ is compact.
\end{itemize}

These axioms ensure that if we set $K_{\infty}$ to be the
stabilizer of some
$h\in X$, then $X=G(\R)\slash K_{\infty}$ is a finite
union of hermitian symmetric
domains on which $G(\R)$ acts transitively and biholomorphically.
We say that $(H,X_H)$ is a \emph{Shimura subdatum} of $(G,X_G)$ if $H\subset G$ and
$X_H\subset X_G$.
A \emph{connected Shimura variety} is defined to be
$\Gamma\backslash X^{+}$, where
$\Gamma\subset G(\Q)$ is a congruence subgroup of $G(\R)$ that
stabilises $X^+$ (\cite{P}).
Thus, to give a connected Shimura variety is the same as to give a triple
$(G,X^+,\Gamma)$.
A connected Shimura variety can be given the structure
of an algebraic variety defined over a number field.

We denote by $G^{\rm ad}$ the adjoint form of a reductive group $G$. Given a Shimura datum $(G,X_G)$ we get an associated
Shimura datum $(G^{\rm ad},X_{G^{\rm ad}})$ such that
we can identify $X_G^+$ with $X^+_{G^{\rm ad}}$.

A {\it morphism of connected Shimura varieties\/}
\[\phi:(H,X_H^+,\Gamma_H)\rightarrow (G,X_G^+,\Gamma_G)\]
is a morphism of $\Q$-groups $\phi:H\rightarrow G$ which carries $X_H^+$
to $X_G^+$ and $\Gamma_H$ to $\Gamma_G$. The image
$\phi(\Gamma_H\backslash X_H^+)$ is an algebraic subvariety of
$\Gamma_G\backslash X_G^+$.

\begin{definition*}

 A subvariety $V\subset \Gamma_G\backslash X_G^+$ is called a \emph{special subvariety} if there
exists a morphism of connected Shimura varieties $\phi:(H,X_H^+,\Gamma_H)\rightarrow (G,X_G^+,\Gamma_G)$ such that
$\phi(\Gamma_H\backslash X_H^+)=V$.
By abuse of notation, we also refer to
$\phi(X_H^+)$ as a special subvariety of $X_G^+$. Keep in mind that
$\phi(X_H^+)$ is an irreducible algebraic subvariety of $X_G^+$
under its semi-algebraic structure as a hermitian symmetric space.
A \emph{special point\/}  is a special subvariety of dimension zero
(in $\Gamma_G\backslash X_G^+$ or in $X_G^+$).

\end{definition*}

For our purposes, we shall need a slightly more general notion
(see \cite{UYS}).

\begin{definition*}
We say that $V\subset \Gamma_G\backslash X_G^+$  is a \emph{weakly special subvariety} if there exist Shimura varieties
$(H_i,X_i^+,\Gamma_i)$ for $i=1,2$ and a Shimura subdatum $(H,X^+_H)\subset (G,X^+_G)$ such that
$$
(H^{\rm ad},X^+_{H^{\rm ad}})=(H_1,X^+_1)\times(H_2,X^+_2)
$$
and $y\in \Gamma_2\backslash X_2^{+}$ such that
$V$ is the image of $X^+_1\times\{y\}$ in $\Gamma_G\backslash X^+_G.$
By the same abuse of notation as above, we refer to the image of $X_1^+\times\{y\}$ in $X^+_G$ as a weakly special subvariety of
$X_G^+$.

\end{definition*}

A weakly special subvariety is special if and only if it contains
a special point; see \cite{MOONEN}.

The following result of Ullmo-Yafaev
is very useful in working with weakly special subvarieties:

\begin{lemma}\cite{UYS}\label{UYS}
An irreducible algebraic variety $S\subset\Hh_g$ is weakly special
iff $\pi_g(S)$ is algebraic.
\end{lemma}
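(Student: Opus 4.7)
The forward implication is essentially the definition. If $S$ is weakly special, say $S=\phi(X_1^+\times\{y\})$ for a morphism arising from a Shimura subdatum $(H,X_H^+)\hookrightarrow(\Sp_{2g},\Hh_g)$ with $(H^{\rm ad},X_{H^{\rm ad}}^+)=(H_1,X_1^+)\times(H_2,X_2^+)$, then $\pi_g(S)$ is the image in $\A_g$ of the induced morphism of Shimura varieties applied to the first factor. Since morphisms of connected Shimura varieties are algebraic (via Baily--Borel), $\pi_g(S)$ is an algebraic subvariety of $\A_g$.

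For the converse, assume $V:=\pi_g(S)$ is algebraic in $\A_g$. The plan is to attach a weakly special subvariety $V^{\rm ws}\supseteq V$ to $V$ via monodromy, show $V=V^{\rm ws}$, and then lift back to $\Hh_g$. Since $\pi_g$ is a covering map away from the $\Sp_{2g}(\Z)$-fixed locus, its restriction to $S$ is a local biholomorphism on a dense open set, so $\dim_\C S=\dim_\C V$. Pick a smooth Zariski-open $V^\circ\subset V$ with basepoint $v$, and consider the algebraic monodromy group $H_v$ of the variation of Hodge structure on $V^\circ$ pulled back from the universal abelian scheme on $\A_g$, namely the identity component of the $\Q$-Zariski closure of the image of $\pi_1(V^\circ,v)\to\Sp_{2g}(\Z)$. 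By Deligne's semisimplicity theorem, $H_v$ is semisimple; by Andr\'e's normality theorem, $H_v$ is normal in the derived group $M_v^{\rm der}$ of the Mumford--Tate group at a Hodge-generic $v$. The almost direct product decomposition of $M_v^{\rm ad}$ with factor $H_v^{\rm ad}$ then yields a Shimura subdatum $(H,X_H^+)\subset(\Sp_{2g},\Hh_g)$, a product decomposition $(H^{\rm ad},X_{H^{\rm ad}}^+)=(H_1,X_1^+)\times(H_2,X_2^+)$, and a point $y$ such that the image of $X_1^+\times\{y\}$ is a weakly special subvariety $V^{\rm ws}\subseteq\A_g$ containing $V$, with $\dim V^{\rm ws}=\dim V$. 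This last equality is the content of Moonen's characterisation of weakly special subvarieties as monodromy-fixed loci, so $V=V^{\rm ws}$.

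To finish, lift back: $\pi_g^{-1}(V)$ decomposes into a countable disjoint union of $\Sp_{2g}(\Z)$-translates of the weakly special lift $\phi(X_1^+\times\{y\})$, each an irreducible analytic subvariety of $\Hh_g$ of dimension $\dim V$. Since $S$ is irreducible analytic of the same dimension and contained in $\pi_g^{-1}(V)$, it must coincide with one such translate and is therefore itself weakly special.

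The hard part is the monodromy step: identifying the correct semisimple factor $H_v$ inside the Mumford--Tate group and verifying $\dim V^{\rm ws}=\dim V$. This rests on substantial external input (Deligne's semisimplicity, Andr\'e's normality theorem for algebraic monodromy, and Moonen-style characterisations of weakly special subvarieties). By contrast, the forward direction is immediate from the definitions, and the lift-back step reduces to a dimension count, using only that two irreducible analytic subvarieties of equal dimension, one contained in the other, must coincide.
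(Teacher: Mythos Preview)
The paper does not give its own proof; the lemma is simply quoted from Ullmo--Yafaev \cite{UYS}. Your outline follows the spirit of their argument, but it has a genuine gap in the converse direction.

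The problem is the sentence ``with $\dim V^{\rm ws}=\dim V$. This last equality is the content of Moonen's characterisation of weakly special subvarieties as monodromy-fixed loci.'' Moonen's result says only that $V^{\rm ws}$, built from the algebraic monodromy group $H_v$, is the \emph{smallest} weakly special subvariety containing $V$; for an arbitrary algebraic $V\subset\A_g$ one has $\dim V^{\rm ws}\geq\dim V$, typically with strict inequality. Your argument, as written, never uses the hypothesis that $S$ is algebraic in $\Hh_g$ beyond the harmless observation $\dim S=\dim V$, so it cannot distinguish the bi-algebraic situation from the generic one.

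The missing step is this. From $\dim S=\dim V$ and the irreducibility of $S$ one gets that $S$ is an entire analytic irreducible component of $\pi_g^{-1}(V)$, hence $S$ is stable under the monodromy group $\Gamma_V$. Now use that the $\Sp_{2g}$-action on $\Hh_g$ extends to an \emph{algebraic} action on the compact dual, and that $S$ is Zariski-closed there: a $\Gamma_V$-stable algebraic set is stable under the Zariski closure of $\Gamma_V$, whose identity component is $H_v$. Thus $H_v(\R)^+\cdot z\subset S$ for every $z\in S$, giving $\dim V^{\rm ws}\leq\dim S=\dim V$ and hence $V=V^{\rm ws}$. Once this is in place, your lift-back paragraph is fine.
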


\begin{cor}\label{INT}
An irreducible component of the intersection of weakly special subvarieties
is weakly special.
\end{cor}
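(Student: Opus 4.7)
The plan is to reduce the statement to Lemma~\ref{UYS}. By induction on the number of subvarieties it suffices to intersect two; let $V_1, V_2 \subseteq \A_g$ be weakly special and let $Z$ be an irreducible component of the algebraic subvariety $V_1 \cap V_2$.

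The first step is to lift to $\Hh_g$. I choose an irreducible analytic component $\tilde Z$ of $\pi_g^{-1}(Z)$, and let $\tilde V_i$ be the irreducible analytic component of $\pi_g^{-1}(V_i)$ that contains $\tilde Z$. Using the Shimura-datum description of weakly special together with the $\Sp_{2g}(\Z)$-equivariance of $\pi_g$, the preimage $\pi_g^{-1}(V_i)$ decomposes as a union of weakly special subvarieties of $\Hh_g$, so by Lemma~\ref{UYS} each $\tilde V_i$ is an irreducible algebraic subvariety of $\Hh_g$. Consequently $\tilde V_1 \cap \tilde V_2$ is algebraic in $\Hh_g$, and I let $C$ be an irreducible algebraic component of $\tilde V_1 \cap \tilde V_2$ containing $\tilde Z$.

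The heart of the argument is to identify $\pi_g(C)$ with $Z$. Since $\pi_g$ is a local biholomorphism, matching dimensions forces $\pi_g(\tilde Z) = Z$, so $Z \subseteq \pi_g(C)$. On the other hand $\pi_g(C) \subseteq V_1 \cap V_2$, and the Zariski closure of $\pi_g(C)$ is irreducible (as $C$ is), hence lies in a single irreducible component of $V_1 \cap V_2$; since this closure contains $Z$, the component must be $Z$ itself, and therefore $\pi_g(C) = Z$ is algebraic. Lemma~\ref{UYS} then yields that $C$ is weakly special in $\Hh_g$. A final dimension check $\dim \tilde Z = \dim Z = \dim \pi_g(C) = \dim C$ shows $\tilde Z = C$ (two irreducible subvarieties of the same dimension with one contained in the other), and therefore $Z = \pi_g(\tilde Z)$ is weakly special. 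The only mildly delicate ingredient is the opening claim that irreducible analytic components of $\pi_g^{-1}(V_i)$ are weakly special subvarieties of $\Hh_g$; this follows directly from the description of $V_i$ as the $\pi_g$-image of a piece $X_1^+ \times \{y\}$ of a Shimura subdatum together with the observation that $\Sp_{2g}(\Z)$-translates of such a piece are again weakly special. Everything else is routine dimension bookkeeping.
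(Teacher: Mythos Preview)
Your argument is correct and is the natural deduction from Lemma~\ref{UYS}; the paper states the corollary without proof, so there is nothing further to compare. One small remark: you phrase the statement for weakly special subvarieties of $\A_g$, whereas the paper's only use of the corollary (in the proof of Theorem~\ref{ALW}) is for weakly special subvarieties of $\Hh_g$---but your proof already yields that version too, since along the way you show that the irreducible algebraic component $C$ of $\tilde V_1\cap\tilde V_2$ has $\pi_g(C)=Z$ algebraic and hence, by Lemma~\ref{UYS}, is weakly special upstairs.
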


\begin{lemma}\label{WeakChar}
Let $(G,X_G^+)$ be a connected Shimura datum.
Let $V$ be a weakly special subvariety of $X_G^+$. Then there exists
a semi-simple subgroup $G_0\subset G$ defined over $\Q$,
with no compact $\Q$-factors, and a point
$Z_0\in X_G^+$ such that the image of $G_{0,\R}$ in
$G^{\rm ad}_{\R}$ is
fixed by the Cartan involution
corresponding to $Z_0$ and $V=G_0(\R)^+\cdot Z_0$.
\end{lemma}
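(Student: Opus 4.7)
The group $G_0$ and point $Z_0$ are to be read off directly from the data witnessing that $V$ is weakly special. By definition there is a Shimura subdatum $(H,X_H^+)\subset(G,X_G^+)$, a $\Q$-decomposition $(H^{\rm ad},X^+_{H^{\rm ad}})=(H_1,X_1^+)\times(H_2,X_2^+)$, and a point $y\in X_2^+$ such that $V$ is the image in $X_G^+$ of $X_1^+\times\{y\}$ under the identification $X_H^+\cong X^+_{H^{\rm ad}}\hookrightarrow X_G^+$. I would pick an arbitrary $z_1\in X_1^+$ and let $Z_0\in X_H^+\subset X_G^+$ be the point corresponding to $(z_1,y)$.

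For the group, I would use the derived group. The isogeny $H^{\rm der}\to H^{\rm ad}$ realises $H^{\rm der}$ as an almost direct product of two $\Q$-subgroups $\tilde H_1,\tilde H_2$ with $\tilde H_i\to H_i$ a central $\Q$-isogeny, and I set $G_0:=\tilde H_1\subset H\subset G$. Then $G_0$ is semisimple and defined over $\Q$ by construction. It has no compact $\Q$-factors: $H_1$, being a $\Q$-factor of $H^{\rm ad}$, inherits the absence of compact $\Q$-factors from the Shimura-datum axioms for $(H,X_H^+)$, and this property is preserved under a central $\Q$-isogeny.

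It then remains to verify the orbit equality and the Cartan-involution condition. The central isogeny $G_0\to H_1$ of connected semisimple $\R$-groups induces a surjection $G_0(\R)^+\twoheadrightarrow H_1(\R)^+$ on real identity components (the image is an open subgroup of the identity component, hence equal to it). Under $X_H^+\cong X_1^+\times X_2^+$, the group $G_0(\R)^+$ acts through $H_1(\R)^+$ on the first factor and trivially on the second, so $G_0(\R)^+\cdot Z_0=X_1^+\times\{y\}=V$. For the Cartan involution, let $h_{Z_0}:\DT\to G_\R$ be the homomorphism associated with $Z_0$; it factors through $H_\R$, and its projection to $H^{\rm ad}_\R=H_{1,\R}\times H_{2,\R}$ is $(h_{z_1},h_y)$. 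The subgroup $G_0$ is normalised by $H$ because its image $H_1$ is a direct (hence normal) factor of $H^{\rm ad}$; therefore ${\rm Int}(h_{Z_0}(i))\in{\rm Aut}(G^{\rm ad}_\R)$ stabilises the image of $G_{0,\R}$ in $G^{\rm ad}_\R$, which is the required condition.

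The only real subtleties are keeping the bookkeeping between $H$, $H^{\rm der}$, $H^{\rm ad}$, and $G^{\rm ad}$ straight, and checking the passage from $H_1(\R)^+$ back to $G_0(\R)^+$ under the isogeny; once $G_0$ and $Z_0$ are pinned down, both assertions are immediate consequences of the product structure of $(H^{\rm ad},X^+_{H^{\rm ad}})$ and of the fact that $G_0\subset H$ acts on $X_G^+$ via the restriction of $h_{Z_0}$.
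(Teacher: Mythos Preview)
Your proposal is correct and follows essentially the same route as the paper. The paper first takes $G_1\subset H$ to be the full preimage of $H_1\times\{1_{H_2}\}$ under $H\to H^{\rm ad}$ and then sets $G_0=[G_1,G_1]$, whereas you go directly to the almost-direct-product factor $\tilde H_1$ of $H^{\rm der}$; these two constructions yield the same semisimple $\Q$-subgroup, and the verifications of the orbit equality, the Cartan-involution stability (via normality of $G_0$ in $H$), and the absence of compact $\Q$-factors are the same in both arguments.
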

\begin{proof}

Since $V$ is weakly special, there are connected
Shimura data 
$$(H^{\rm ad},X^{+}_{H^{\rm ad}})=(H_1,X_1^+)\times(H_2,X_2^+)$$
where $(H,X_H)$ is a Shimura subdatum of $(G,X_G)$,
and a point $y\in X^+_2$
such that $V$ is the image of $\phi(X_1^+\times\{y\})$ in 
$\Gamma_G\backslash X^+_G$.

Pick $h\in X_1^+$ and set
$Z_0=(h,y)$, where as always we identify 
$X^{+}_{H^{\rm ad}}$ with $X^+_H$.
Finally, take $G_1\subset H$ to be $\phi$ composed 
with the pullback under the map $H\rightarrow H^{\rm ad}$
of $H_1\times\{1_{H_2}\}$.
Then the Cartan involution corresponding to $Z_0$
is given by conjugation
by a lift to $H(\R)$ of $(h,y)(i)$. Thus, $G_{1,\R}$ is fixed 
under this involution.
By definition, we have
%the center of $H_1(\R)^+$ acts trivially on $X_{H_1}^+$ and
%$G_0(\R)^+\rightarrow H_1^{\rm ad}(\R)^+$ is surjective,  we have
$$V=X_1^+\times\{y\}= G_1(\R)^+\cdot Z_0,$$ as desired. 
Finally, we let $G_0$ be the derived subgroup
of $G_1$. The fact that $G_0^{\rm ad}$ has no compact $\Q$-factors
follows from the fact that $H_1^{\rm ad}$ has none.
\end{proof}

Now, given an irreducible variety $V\subset\A_g$, let $S(V)$
be the minimal weakly special subvariety containing
$V$. Let $V^{\rm sm}$ be the smooth locus of $V$, and  let $U_0$
be a connected component of $\pi_g^{-1}(V^{\rm sm})$. Define the
{\it monodromy group\/} $\Gamma_V\subset \Sp_{2g}(\Z)$
to be the stabilizer of $U_0$. Let $G_0$ be the identity
component of the Zariski closure of $\Gamma_V$.

\begin{lemma}\label{MinimalWS}
The group $G_0$ is semi-simple and
$\pi_g(G_0(\R)^+\cdot v)= S(V)$ for all $v\in U_0$.
\end{lemma}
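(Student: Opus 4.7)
The plan is to identify $G_0$ with the $\Q$-semisimple group underlying $S(V)$ via Lemma \ref{WeakChar}. I would fix $v\in U_0$ and let $\tilde S$ denote the unique irreducible component of $\pi_g^{-1}(S(V))$ containing $U_0$; by Lemma \ref{WeakChar} one may write $\tilde S=H(\R)^{+}\cdot v$ for some $\Q$-semisimple $H\subseteq\Sp_{2g}$ with no compact $\Q$-factors, whose image in $\Sp_{2g,\R}^{\rm ad}$ is fixed by the Cartan involution at $v$. The goal is then to prove $G_0=H$, which immediately yields both assertions of the lemma.

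For the inclusion $G_0\subseteq H$, I would argue as follows. Since $U_0$ is $\Gamma_V$-stable and $\tilde S$ is the unique component of $\pi_g^{-1}(S(V))$ containing $U_0$, every $\gamma\in\Gamma_V$ preserves $\tilde S$. The stabiliser of $\tilde S$ in $\Sp_{2g}(\R)$ differs from $H(\R)^{+}$ only by a compact subgroup of the stabiliser of $v$, so discreteness of $\Sp_{2g}(\Z)$ forces a finite-index subgroup of $\Gamma_V$ into $H(\R)$. Taking Zariski closure and identity component yields $G_0\subseteq H$. This already gives the inclusion $\pi_g(G_0(\R)^{+}\cdot v)\subseteq S(V)$, and it shows that $G_0$ has no compact $\Q$-factors: any such factor would meet the discrete group $\Gamma_V$ in only a finite subgroup, contradicting Zariski density.

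Semisimplicity of $G_0$ I would obtain from Deligne's theorem on the monodromy of a polarizable variation of Hodge structure, applied to $R^{1}\pi_{*}\Z$ on $V^{\rm sm}$. For the Cartan-involution condition required by Lemma \ref{WeakChar}, I would invoke Andr\'e's refinement that $G_0$ is a normal $\Q$-subgroup of the derived Mumford--Tate group (which contains $H$): conjugation by $h_v(i)\in H(\R)$ then preserves $G_0$. This makes Lemma \ref{WeakChar} applicable to the pair $(G_0,v)$, so $G_0(\R)^{+}\cdot v\subseteq\tilde S$ is itself a weakly special subvariety of $\Hh_g$, and by Lemma \ref{UYS} its image $\pi_g(G_0(\R)^{+}\cdot v)$ is algebraic and weakly special.

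The remaining step, which is also the main obstacle, is to establish $U_0\subseteq G_0(\R)^{+}\cdot v$: once this is proved, $\pi_g(G_0(\R)^{+}\cdot v)$ will be a weakly special subvariety of $\A_g$ containing $V$, so by minimality of $S(V)$ it will equal $S(V)$, forcing $G_0(\R)^{+}\cdot v=\tilde S$ and $G_0=H$. I would prove this via a tangent-space computation at $v$: show that $T_v U_0\subseteq T_v\Hh_g$ lies inside the $(-1,1)$-part of $\mathfrak{g}_{0,\C}$ under the Hodge decomposition induced by $h_v$, and that this subspace coincides with $T_v(G_0(\R)^{+}\cdot v)$, so the orbit contains a neighbourhood of $v$ in $U_0$; analytic continuation along $\Gamma_V$-translations then sweeps out the connected set $U_0$. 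This tangent-space identification is the deepest Hodge-theoretic input; everything else is bookkeeping using Lemmas \ref{UYS} and \ref{WeakChar}.
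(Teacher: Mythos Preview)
Your outline heads in the right direction but does far more work than the paper, which dispatches the lemma in two citations: semisimplicity is Andr\'e's Theorem~1 in \cite{A}, and the identity $\pi_g(G_0(\R)^+\cdot v)=S(V)$ is Moonen \cite{MOONEN}, Lemmas~3.7 and~3.9. What you have sketched is essentially a re-derivation of Moonen's lemmas from Andr\'e's normality theorem, so the underlying Hodge-theoretic content is the same.

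There is, however, one genuine gap that the paper explicitly addresses and you do not. You appeal to ``Deligne's theorem on the monodromy of a polarizable variation of Hodge structure, applied to $R^{1}\pi_{*}\Z$ on $V^{\rm sm}$'', but $\A_g$ carries no universal family, since $\Sp_{2g}(\Z)$ does not act freely on $\Hh_g$; there is therefore no polarizable VHS over $V^{\rm sm}$ to which Deligne's or Andr\'e's theorem applies directly. The paper fixes this by passing to the full level-$3$ cover $\A_{g,3}=\Gamma(3)\backslash\Hh_g$, where the action is free, applying \cite{A} to an irreducible component $V_3$ of the pullback of $V$, and then observing that $\Gamma_{V_3}$ is a finite-index normal subgroup of $\Gamma_V$, so the identity components of their Zariski closures coincide. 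Your argument needs the same manoeuvre.

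On your ``main obstacle'', the inclusion $U_0\subset G_0(\R)^+\cdot v$: this is indeed the heart of the matter, but it is not really a local tangent-space computation. Once Andr\'e gives that $G_0$ is normal in the derived Mumford--Tate group, the associated hermitian domain splits as a product; the projection of $U_0$ to the complementary factor has trivial monodromy, and the theorem of the fixed part forces that projection to be constant, so $U_0$ lies in a single $G_0(\R)^+$-orbit. Your proposed inclusion $T_vU_0\subset(\mathfrak g_{0,\C})^{-1,1}$ is a \emph{consequence} of this global argument rather than something visible infinitesimally at $v$, since $G_0$ is defined via monodromy and not via the tangent sheaf of $U_0$. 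This is exactly what Moonen's Lemmas~3.7 and~3.9 establish, and why the paper simply cites them.
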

\begin{proof}

The semi-simplicity would follow from Theorem 1 of \cite{A}, 
except for the fact that $\Sp_{2g}(\Z)$ does not act freely on 
$\Hh_g$ and thus $\A_g$ does not possess
a family of polarized Hodge structures. However, consider the full 
congruence group $\Gamma(3)\subset \Sp_{2g}(\Z)$. 
This subgroup does act freely on $\Hh_g$, so that
$\A_{g,3}:=\Gamma(3)\backslash\Hh_g$ does have a family of 
polarized Hodge structures. Now, we have the finite map 
$\phi_3:\A_{g,3}\rightarrow\A_g$. Let $V_3\subset \A_{g,3}$ 
be an irreducible component of $\phi_3^*V$. We can apply 
Theorem 1 of \cite{A} to $V_3$ and get that the identity component 
of the Zariski closure of $\Gamma_{V_3}$ is semi-simple. 
However, $\Gamma_{V_3}$ is a finite index normal subgroup of 
$\Gamma_V$ and thus their Zariski closures have the same 
identity component, proving the semi-simplicity of $G_0$.
The rest of the lemma follows  from \cite{MOONEN}, 
Lemmas 3.7 and 3.9
\end{proof}

\subsection{Fundamental domains}

We set $S_g$ to be the usual Siegel fundamental domain for
the action of
$\Sp_{2g}(\Z)$ on $\Hh_g$ (see \cite{PEST}, or \cite{IGUSA}, \S 3.3).

\begin{lemma}\label{fundomain}
Suppose $X+iY\in S_g$. Then there exists a constant 
$c_g>0$ such that
\begin{enumerate}[label=(\alph*)]
\item All the coefficients of $X$ are bounded in 
absolute value by $\frac{1}{2}$
\item $|Y|\leq\prod_{y=1}^g y_{ii} \leq c_g|Y|$, 
where $|Y|$ is the determinant of $Y$
\item $\frac{\sqrt{3}}{2}\leq y_{11}\leq 
y_{2 2}\leq y_{33}\dots\leq y_{gg}$
\item $\forall 1\leq i\neq j\leq g,  |y_{ij}|\leq \frac12\min(y_{ii},y_{jj})$.
\end{enumerate}
\end{lemma}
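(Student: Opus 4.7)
The plan is to unwind the definition of $S_g$ and then appeal to standard facts about Minkowski-reduced positive definite symmetric matrices. Recall (see \cite{IGUSA}, \S3.3) that $S_g$ is cut out by three families of conditions on $Z = X+iY$: (i) $|\det(CZ+D)| \geq 1$ for every $(C,D)$ arising as the bottom block-row of a matrix in $\Sp_{2g}(\Z)$; (ii) $Y$ is Minkowski reduced; and (iii) every entry of $X$ lies in $[-\tfrac12,\tfrac12]$. Property (a) is immediate from (iii).

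Properties (c) (the monotonicity) and (d) come directly from (ii). Minkowski reduction asserts that $v^{t}Yv \geq y_{kk}$ for every integer vector $v$ with $\gcd(v_k,\dots,v_g)=1$. Applied to $v = e_j$ with $j \geq i$ and $k=i$, this gives $y_{jj} \geq y_{ii}$, yielding the monotonicity in (c). Applied to $v = e_i \pm e_j$ (with $i<j$), once with $k=i$ and once with $k=j$, it yields $y_{ii}+y_{jj}\pm 2y_{ij} \geq \max(y_{ii},y_{jj})$, from which $|y_{ij}| \leq \tfrac12\min(y_{ii},y_{jj})$ follows.

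For the lower bound $y_{11}\geq \sqrt{3}/2$ in (c), I would exhibit a single element of $\Sp_{2g}(\Z)$ for which condition (i) collapses to the classical $\SL_2(\Z)$ estimate. Taking $A = D = \mathrm{diag}(0,1,\ldots,1)$, $B = -e_{11}$, $C = e_{11}$ gives a valid symplectic matrix whose $CZ+D$ is upper triangular with diagonal $(z_{11},1,\ldots,1)$, so condition (i) forces $|z_{11}| \geq 1$. Combined with $|x_{11}|\leq \tfrac12$ from (a), this yields $y_{11}^2 \geq 3/4$.

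Finally, for (b) the lower direction $|Y|\leq \prod y_{ii}$ is Hadamard's inequality for positive definite symmetric matrices. The upper direction $\prod y_{ii}\leq c_g|Y|$ is the classical theorem of Minkowski on reduced positive definite forms, with an explicit $c_g$ obtained by induction on $g$ using the block decomposition of $Y$ and the off-diagonal bound from (d) to control the Schur complement (see \cite{IGUSA}, \S3.3). This Minkowski bound is the only ingredient not essentially immediate from the defining conditions of $S_g$, and so constitutes the main obstacle, though it is entirely classical.
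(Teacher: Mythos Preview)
Your proof is correct and follows essentially the same approach as the paper: parts (a) and (d) come straight from the defining conditions of $S_g$ (the paper simply says ``by definition'' where you unwind Minkowski reduction explicitly), your derivation of (c) is exactly what lies behind the paper's citation of \cite{IGUSA}, Lemma 15, and for (b) both you and the paper invoke classical reduction theory, the only cosmetic difference being that the paper phrases the upper bound via Minkowski's second theorem on successive minima together with \cite{K}, Theorem 2, rather than citing Minkowski's bound for reduced forms directly.
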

\begin{proof}
Parts (a) and (d) are true by definition, while (c) follows
from \cite{IGUSA}, Lemma 15, p.195.
Part (b) follows from Minkowski's 2nd result on successive minima,
together with \cite{K}, Theorem 2.
\end{proof}

\subsection{Norms}

We define the {\it norm\/} of a matrix
$Z=(z_{ij})=X+iY\in \Hh_g$, where $X,Y\in M_g(\R)$,  to be
$$h(Z)={\rm max\/} (1,|z_{ij}|, |Y|^{-1}),$$
and we define the {\it norm\/} of a matrix
$M=(m_{ij})\in {\rm M\/}_{n}(\R)$ to be
$$h(M)={\rm max\/}(1,|m_{ij}|).$$

\subsection{O-minimality and definability}

In this paper {\it definable\/} will mean {\it definable
in the o-minimal structure $\R_{\rm an\ exp}$.\/}
See \cite{PILAOAO} for an introduction o-minimality and
\cite{DM, DMM} for the properties of $\R_{\rm an\ exp}$.
An essential input to enable the o-minimal machinery to be applied
is the following result due to Peterzil-Starchenko \cite{PEST}:

\begin{thm}\label{define}
The projection map $\pi_g:S_g\rightarrow\A_g$ is definable.
\end{thm}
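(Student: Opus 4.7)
The plan is to reduce definability of $\pi_g$ to that of a finite list of scalar Siegel modular forms, and then to handle each such form via its Fourier expansion. By \cite{GEER}, \S10--11, there exist scalar Siegel modular forms $f_0,\ldots,f_N$ of some common weight $k$ providing a closed embedding of $\A_g$ into $\mathbb{P}^N$. Covering $\mathbb{P}^N$ by its standard (definable) affine charts $\{[w_0:\cdots:w_N] : w_j \neq 0\}$, it suffices to prove that each coordinate function $f_i \colon S_g \to \C$ is definable in $\R_{\rm an,exp}$.

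For a fixed $f_i$ I would exploit the Fourier expansion
$$f_i(Z) = \sum_{N \geq 0} a_i(N)\, e^{2\pi i \operatorname{Tr}(NZ)},$$
indexed by positive semi-definite half-integral symmetric $g\times g$ matrices $N$. Lemma \ref{fundomain} guarantees that on $S_g$ the imaginary part $Y$ is Minkowski-reduced with $y_{11} \geq \sqrt{3}/2$, so the standard estimate for reduced forms gives $\operatorname{Tr}(NY) \geq c_g \sum_i n_{ii} y_{ii}$ with some $c_g>0$ depending only on $g$. This yields absolute and uniform convergence of the series on $S_g$ with geometric decay controlled by the diagonal entries $n_{ii}$.

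The key step is to factor $f_i$ as an $\R_{\rm an}$-definable function composed with an $\R_{\rm exp}$-definable change of variables. Introduce exponential coordinates $q_{ij} = e^{2\pi i z_{ij}}$ for $i \leq j$: the assignment $Z \mapsto (q_{ij})$ is definable in $\R_{\rm exp}$, and the diagonal coordinates satisfy $|q_{ii}| \leq e^{-\pi\sqrt{3}} < 1$ on $S_g$. The obstruction is that the Fourier expansion is not literally a power series in the $q_{ij}$, since the off-diagonal exponents $2n_{ij}$ can be negative. To circumvent this I would group summands by the diagonal tuple $(n_{11},\ldots,n_{gg})$ and use the semi-definiteness estimate $|2n_{ij}| \leq n_{ii}+n_{jj}$; after multiplying each inner sum by a suitable non-negative monomial in $q_{ii},q_{jj}$ to clear denominators, one obtains a genuine convergent power series in bounded variables. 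Such a series defines a real analytic function on a compact polydisc and is therefore definable in $\R_{\rm an}$; composing with the $\R_{\rm exp}$-definable coordinate map expresses $f_i$ as a definable function, and assembling the coordinates gives definability of $\pi_g$.

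The chief obstacle will be the rearrangement at the boundary behaviour of $S_g$: verifying that clearing denominators and regrouping preserves absolute uniform convergence and yields a function which is genuinely real analytic in the bounded variables on a compact set. This is a delicate but essentially combinatorial exercise powered by the Minkowski-style estimates in Lemma \ref{fundomain}; once it is in hand, the theorem follows from the standard closure properties of $\R_{\rm an,exp}$ under composition.
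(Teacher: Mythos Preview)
The paper does not supply its own proof of this theorem: it is stated as a result of Peterzil--Starchenko and simply cited as \cite{PEST}. So there is no in-paper argument to compare against; your proposal is effectively a sketch of how one might reproduce the Peterzil--Starchenko theorem, and indeed your outline (embed $\A_g$ via scalar Siegel modular forms, expand each $f_i$ in its Fourier series, pass to exponential coordinates, and recognise the result as a restricted-analytic function composed with an $\R_{\exp}$-definable map) follows the same philosophy as \cite{PEST}.

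That said, there is a genuine gap in your sketch at exactly the point you flag as ``delicate''. You introduce $q_{ij}=e^{2\pi i z_{ij}}$ and propose to clear the negative off-diagonal exponents by multiplying by monomials in $q_{ii},q_{jj}$ so as to obtain ``a genuine convergent power series in bounded variables''. But the off-diagonal $q_{ij}$ are \emph{not} bounded on $S_g$: by Lemma~\ref{fundomain}(d) one has $|y_{ij}|\le \tfrac12 y_{ii}$ (for $i<j$), so $|q_{ij}|=e^{-2\pi y_{ij}}$ ranges between $e^{-\pi y_{ii}}$ and $e^{\pi y_{ii}}$, and $y_{ii}$ is unbounded above. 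Multiplying by a monomial in $q_{ii},q_{jj}$ does not convert a negative power of $q_{ij}$ into a non-negative power of $q_{ij}$, so you cannot literally obtain a power series in the variables $q_{ii},q_{ij}$ with all of them lying in a compact polydisc. What actually works is to pass to \emph{combined} variables such as $q_{ii}q_{ij}^{2}$ and $q_{ii}q_{ij}^{-2}$ (for $i<j$), which \emph{are} bounded by $1$ on $S_g$ thanks to Lemma~\ref{fundomain}(d), and then to check that the positive-semidefiniteness constraint $|2n_{ij}|\le n_{ii}+n_{jj}$ lets one rewrite every Fourier monomial as a monomial with non-negative exponents in $q_{11},\dots,q_{gg}$ together with these combined variables. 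Making this rewriting precise, and verifying that the resulting series is a genuine convergent power series on a fixed compact polydisc, is the real content of the Peterzil--Starchenko argument; it is more than the ``essentially combinatorial exercise'' your sketch suggests, and without it the $\R_{\rm an}$-definability claim does not follow.
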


\section{Some norm bounds}

In this section we prove some basic lemmas concerning the norms
introduced in 2.6 above.  We shall only care
about asymptotic growth, and moreover we only wish to
establish that certain
quantities do not exhibit super-polynomial growth.
Thus, we introduce some notation.

\begin{definition*}
Let $M$ be a set and let $F, G$ be functions mapping $M$ to $\R_{>0}$.
We say that $F$ is \emph{polynomially bounded} in $G$ if there exist constants
$a,b>0$ with $F(m)\leq a\cdot G(m)^b$, and write $F\prec G$.
If $F\prec G$ and $G\prec F$ we write $F\asymp G$.
\end{definition*}

Clearly, $\prec$ is transitive, whereas $\asymp$ is an equivalence relation.
It is also clear that $F\prec G\Leftrightarrow G^{-1}\prec F^{-1}$.
We record some basic facts.

\begin{lemma}\label{basicheight}
If $Z\in\Hh_g$ and $M_1, M_2\in\Sp_{2g}(\R)$ we have:
\begin{enumerate}
\item $h(M_1M_2)\prec h(M_1)h(M_2)$
\item $h(M_1)\asymp h(M_1^{-1})$
\item $h(M_1Z)\prec h(M_1)h(Z)$.

\end{enumerate}
\end{lemma}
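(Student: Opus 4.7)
The plan is to prove the three parts in turn, the first two being immediate from the definitions. For (1), the direct expansion $(M_1M_2)_{ij} = \sum_{k=1}^{2g}(M_1)_{ik}(M_2)_{kj}$ shows $|(M_1M_2)_{ij}| \leq 2g \cdot h(M_1)h(M_2)$, giving $h(M_1M_2) \prec h(M_1)h(M_2)$. For (2), the symplectic relation $M_1JM_1^t=J$ combined with $J^{-1}=-J$ yields the closed form $M_1^{-1}=-JM_1^tJ$; since multiplication by $J$ only permutes and negates entries, we actually have $h(M_1^{-1})=h(M_1)$.

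For (3), write $M_1 = \begin{pmatrix} A & B \\ C & D\end{pmatrix}$ and $W := M_1 Z = (AZ+B)(CZ+D)^{-1}$, with $Y' := \mathrm{Im}(W)$. The definition of $h(W)$ has two pieces, $|W_{ij}|$ and $|Y'|^{-1}$, which I bound separately. For $|Y'|^{-1}$ I use the classical transformation law $\mathrm{Im}(W) = (CZ+D)^{-t}Y(C\bar{Z}+D)^{-1}$, yielding $|Y'|^{-1}=|\det(CZ+D)|^2/|Y|$. Expanding $\det(CZ+D)$ as a polynomial of degree $g$ in the entries of $C,D,Z$ gives $|\det(CZ+D)| \prec (h(M_1)h(Z))^g$, and $|Y|^{-1}\leq h(Z)$ by definition of $h$; combining these bounds $|Y'|^{-1}$ polynomially in $h(M_1)h(Z)$.

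The entries $|W_{ij}|$ are more delicate. Cramer's rule expresses each $W_{ij}$ as a rational function in the entries of $M_1$ and $Z$, with numerator a polynomial of bounded degree (hence polynomially bounded in $h(M_1)h(Z)$) and common denominator $\det(CZ+D)$. It remains to produce a polynomial lower bound on $|\det(CZ+D)|$. I would do this via the Cartan ($KAK$) decomposition $M_1 = K_1 A K_2$, where $K_1,K_2$ lie in the maximal compact subgroup of $\Sp_{2g}(\R)$ (contained in $O(2g)$, so all entries bounded by $1$), and $A = \mathrm{diag}(\lambda_1,\ldots,\lambda_g,\lambda_1^{-1},\ldots,\lambda_g^{-1})$ is positive diagonal with $h(A) \asymp h(M_1)$. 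Under $A$ the action is explicitly $Z \mapsto \Lambda Z \Lambda$ with $\Lambda = \mathrm{diag}(\lambda_i)$, which scales entries by $\lambda_i\lambda_j$ and $|Y|$ by $(\prod\lambda_i)^2$, both polynomially controlled by $h(A)$. Under a compact-group element $K$, the action is an isometry of the invariant metric fixing $iI$; combining this with the comparability $h(Z) \asymp \exp(O(d(Z,iI)))$ yields $h(K \cdot Z) \asymp h(Z)$. Composing the three stages gives the desired polynomial bound on $h(M_1 Z)$.

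The main obstacle is the compact part of the decomposition: while conceptually the compact subgroup acts as isometries of hyperbolic distance and so cannot blow up $h$, making the bound quantitative requires verifying that $h$ is polynomially equivalent to $\exp(d(\cdot,iI))$, which I would carry out by diagonalizing $Y$ and checking the comparison in terms of the eigenvalues of $Y$ and of $Y^{-1/2}XY^{-1/2}$.
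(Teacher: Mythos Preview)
Your arguments for (1) and (2) are essentially those of the paper; your formula $M_1^{-1}=-JM_1^tJ$ is in fact a cleaner way to see (2) than the paper's appeal to minors and $\det M_1=1$.

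For (3) your route diverges from the paper's. The paper never invokes $KAK$ or the invariant distance. Instead it uses the single algebraic identity
\[
(C\overline{Z}+D)^t(AZ+B)-(A\overline{Z}+B)^t(CZ+D)=2iY
\]
to obtain the lower bound $|\det(CZ+D)|^{-1}\prec h(M_1)h(Z)$ directly: one picks a unit vector $\xi$ with $\|(CZ+D)\xi\|\prec |\det(CZ+D)|$, pairs the identity against $\xi$ and $\bar\xi$, and reads off $|\xi^t Y\bar\xi|\prec |\det(CZ+D)|\,h(M_1)h(Z)$; a Hadamard-type estimate then converts this into the desired bound. Once $|\det(CZ+D)|^{-1}$ is controlled, the entries of $W$ follow by Cramer exactly as you say, and the paper handles $|Y'|^{-1}$ by writing $(Y')^{-1}=(C\overline{Z}+D)Y^{-1}(CZ+D)^t$ and bounding entries. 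Your treatment of $|Y'|^{-1}$ via $|Y'|^{-1}=|\det(CZ+D)|^2/|Y|$ together with the trivial \emph{upper} bound on $|\det(CZ+D)|$ is in fact neater than the paper's for that piece.

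Your $KAK$ argument is correct in outline, and the diagonal step is immediate. The comparability $h(Z)\asymp\exp\bigl(d(Z,iI_g)\bigr)$ that you need for the compact step is true and can be verified along the lines you indicate (the eigenvalues of the associated positive-definite $2g\times2g$ matrix control both $d$ and $h$), but carrying this out carefully is considerably more work than the paper's two-line identity manipulation. So both approaches succeed; the paper's is shorter and purely algebraic, while yours is more structural and would generalise more transparently to other symmetric spaces once the $h\asymp\exp(d)$ comparison is in hand.
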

\begin{proof}

\begin{enumerate}

\item Clear.
\item By symmetry, it is enough to show $h(M_1^{-1})\prec h(M_1)$.
This is obvious because the minors of $M_1$ are polynomial in the
entries of $M_1$, and $|M_1|=1$.
\item Write $M_1=\begin{pmatrix} A & B\\ C & D\end{pmatrix}$.
Then $M_1Z = (AZ+B)(CZ+D)^{-1}$. We first bound $|CZ+D|$ from below.
Note the identity (\cite{GEER}, p184)
$$({}^*)\quad\quad\quad
(C\overline{Z}+D)^t(AZ+B)-(A\overline{Z}+B)^t(CZ+D)=2iY.$$

Let $f=|CZ+D|$. Then there exists a vector $\xi\in\C^g$ with $||\xi||=1$ and
$||(CZ+D)\xi||\prec f,$ where $||\xi||$ is defined to be $\bar{\xi}^t\cdot\xi$.

Complete $\xi$ to a unitary basis
$\xi=\xi_1,\xi_2,\dots,\xi_g$. Then we have
\[|Y|\leq \prod_{i=1}^g|\xi_i^tY\bar{\xi_i}|\prec |\xi^tY\bar{\xi}|h(Y)\] so that
$$|\xi^tY\bar{\xi}|^{-1}\prec h(Y)\leq h(Z).$$

Finally, (*) gives
$$
|\xi^tY\bar{\xi}| \prec fh(M_1)h(Z),
$$
so that $f^{-1}\prec h(M_1)h(Z)$. It is now clear that all the entries of
$M_1Z$ are polynomially bounded by $h(M_1)h(Z)$. It remains to
show that if $M_1Z= X'+iY'$ then $|Y'|^{-1}\prec h(M_1)h(Z)$.

Again using (*) and the fact that $M_1Z$ is symmetric, we derive
$(Y')^{-1} = (C\overline{Z}+D)Y^{-1}(CZ+D)^t.$
Thus all the coefficients of  $(Y')^{-1}$ are
polynomially bounded by $h(M_1)h(Z)$, and thus so is $|Y'|^{-1}$.
This completes the proof.
\end{enumerate}
\end{proof}

We restate here Lemma 3.2 from \cite{PT}. For a point
$Z\in\Hh_g$ there is a unique $\gamma_Z\in\Sp_{2g}(\Z)$ such that
$\gamma_Z\cdot Z\in S_g$.

\begin{lemma}\label{movingbound}
$h(\gamma_Z)\prec h(Z)$.
\end{lemma}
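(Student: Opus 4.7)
The plan is to bound each block of $\gamma_Z = \begin{pmatrix} A & B \\ C & D \end{pmatrix}$ separately in terms of $h(Z)$, starting from the classical transformation identity for imaginary parts. Write $Z = X+iY$ and $W = \gamma_Z\cdot Z = X'+iY' \in S_g$. The workhorse will be
\[ Y = (CZ+D)^T Y'(C\bar Z + D),\]
whose real part reads
\[ Y = (CX+D)^T Y'(CX+D) + Y C^T Y' C Y.\]

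First I would bound the bottom rows $(C,D)$. Because $W\in S_g$, Lemma~\ref{fundomain}(c) furnishes an absolute positive-definite lower bound $Y' \geq c_g I_g$, and the two positive-semidefinite summands above are each bounded by $Y$. This yields the matrix inequalities
\[ C^T C \leq c_g^{-1} Y^{-1}, \qquad (CX+D)^T(CX+D) \leq c_g^{-1} Y.\]
Since $h(Y) \prec h(Z)$, and Cramer's rule together with $|Y|\geq h(Z)^{-1}$ gives $h(Y^{-1}) \prec h(Z)$, the entry bounds give $h(C) \prec h(Z)$ and $h(CX+D) \prec h(Z)$, hence $h(D) \prec h(Z)$.

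For the top rows $(A,B)$, I would expand $W(CZ+D) = AZ+B$ and separate real and imaginary parts to obtain
\[ A = Y'(CX+D) Y^{-1} + X' C, \qquad B = X'(CX+D) - Y'CY - AX.\]
By Lemma~\ref{fundomain}(a), $|X'_{ij}| \leq \tfrac{1}{2}$, and the other factors on the right, apart from $Y'$, are already polynomially controlled by $h(Z)$ from the previous step. The main obstacle is therefore a polynomial upper bound on $h(Y')$: this is delicate because $Y'$ itself is not bounded above in $S_g$ (one can approach the cusp). My plan is to use the identity $|Y'| = |Y|/|\det(CZ+D)|^2$ together with a polynomial lower bound
\[ |\det(CZ+D)| \succ h(Z)^{-1},\]
to be extracted from the integrality and primitivity of the pair $(C,D)$ (as the bottom half of a symplectic integer matrix) via a Minkowski-type short-vector estimate for the associated real lattice in $\C^g$. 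Because $Y'$ is Minkowski-reduced in $S_g$, Lemma~\ref{fundomain}(b,d) yields $h(Y') \asymp y'_{gg} \prec |Y'|$, so $h(Y') \prec h(Z)$. Assembling everything then gives $h(A), h(B) \prec h(Z)$, completing the proof (as carried out in detail in \cite{PT}, from which the result is quoted).
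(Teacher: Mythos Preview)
The paper does not actually prove this lemma: it merely restates Lemma~3.2 of \cite{PT} and refers the reader there. So there is no in-paper argument to compare against beyond the citation itself, which you already acknowledge in your last line.

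On its own merits, your first half is correct and is the standard opening move: the identity $Y=(CZ+D)^{*}Y'(CZ+D)$, the uniform lower bound $Y'\ge c_g I_g$ valid on $S_g$, and the resulting matrix inequalities do give $h(C),h(D)\prec h(Z)$. Your expressions for $A$ and $B$ are also right, and you correctly isolate $h(Y')$ as the only remaining obstacle.

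The soft spot is your plan for controlling $h(Y')$. The reduction $h(Y')\prec |Y'|$ on $S_g$ and the relation $|Y'|=|Y|/|\det(CZ+D)|^{2}$ are fine, but the asserted lower bound $|\det(CZ+D)|\succ h(Z)^{-1}$ is not something one extracts by ``a Minkowski-type short-vector estimate for the associated real lattice in $\C^g$.'' When $C$ is invertible the bound is clean: $C^{-1}D$ is real symmetric (from $CD^{t}=DC^{t}$), so $|\det(CZ+D)|=|\det C|\cdot|\det(Z+C^{-1}D)|\ge 1\cdot |Y|\ge h(Z)^{-1}$. The substantive case is singular $C$, and there the argument is not a lattice-point count but a block reduction: one left-multiplies by a unimodular $U$ to bring $C$ to the shape $\binom{C_1}{0}$, uses $CD^{t}=DC^{t}$ to see that the complementary rows of $D$ annihilate $C_1^{t}$, and then proceeds by induction on the rank together with primitivity of $(C\ D)$. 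Your ``Minkowski'' gesture does not capture this mechanism, so as written the sketch has a gap at exactly this step --- though since both you and the paper ultimately defer to \cite{PT} for the details, it lands in the same place the paper does.
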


\section{Volumes of algebraic curves in $\Hh_g$.}

\subsection{Volumes of algebraic curves in $\Hh_g$.}

Recall that there is a complex structure on $\Hh_g$ via the
imbedding $\Hh_g\hookrightarrow M_g(\C)^{\rm sym}=\C^{g(g+1)/2}$.
Take $C\subset\Hh_g$ to be a curve in $\Hh_g$ (i.e. an irreducible algebraic subvariety of dimension 1).
We define the degree of $C$ to be the degree of the Zariski closure of $C$
in $\C^{g(g+1)/2}$. The restriction
of the metric $d\mu(Z)$ gives a Riemannian metric on $C$, and thus an induced
Volume form $dC$. Our goal in this section is to prove the following theorem.

\begin{thm}\label{SmallCurves}
For a curve $C\subset \Hh_g$ of degree $k$ we have the bound
\[\int_{C\cap S_g} dC \ll k\]
where the implied constant depends on $g$.
\end{thm}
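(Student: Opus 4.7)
The plan is to invoke Wirtinger's identity, writing $\int_{C\cap S_g} dC = \int_{C\cap S_g} \omega$ where $\omega$ is the K\"ahler form of the invariant metric $d\mu$, and to split $S_g$ into a ``core'' $\{y_{gg}\leq T_0\}$ and a ``cusp'' $\{y_{gg}>T_0\}$ for a suitable cutoff $T_0 = T_0(g)$.

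For the core, Lemma~\ref{fundomain} gives $|x_{ij}|\leq 1/2$ and $|y_{ij}| = O(T_0)$, so the core is contained in a Euclidean polydisk of bounded polyradius in $\C^{g(g+1)/2}$. By Minkowski reduction, the eigenvalues of $Y \in S_g$ are comparable to the diagonal entries $y_{11}\leq\cdots\leq y_{gg}$, and the smallest is $\gtrsim 1$, so $Y^{-1}$ has bounded operator norm and hence $d\mu \ll d_{\rm Eucl}^{2}$ throughout the core. A standard Wirtinger--B\'ezout bound then gives that the Euclidean $2$-volume of a degree-$k$ complex algebraic curve inside a bounded Euclidean polydisk is $O(k)$, whence the core contribution to $\int_{C\cap S_g}dC$ is $O(k)$.

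For the cusp, $\omega$ decays in the ``$g$-th'' direction as $y_{gg}\to\infty$, since the smallest eigenvalue of $Y^{-1}$ is $\asymp y_{gg}^{-1}$. I would take the projective closure of $C$ in $\mathbb{P}^{g(g+1)/2}$; it meets the hyperplane at infinity in at most $k$ points, through each of which $C$ passes as finitely many analytic branches, totalling at most $k$. On each such branch, using a local uniformizer $t\to 0$ together with the formula $d\mu = \mathrm{Tr}(Y^{-1}dZ\,Y^{-1}d\bar Z)$ and the decay of the smallest eigenvalue of $Y^{-1}$, one verifies that the hyperbolic area of the branch piece inside the cusp is uniformly $O(1)$---the higher-dimensional analogue of $\int_{|x|\leq 1/2,\,y\geq T_0}\frac{dx\,dy}{y^{2}}<\infty$ in the classical upper half plane. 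Summing over at most $k$ branches yields $O(k)$.

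The main obstacle is this branch-by-branch $O(1)$ cusp estimate: one must show that the decay of the $y_{gg}$-direction of $Y^{-1}$ dominates the polynomial growth of the Euclidean coordinates along each branch. I expect this to reduce, via a Puiseux parametrization (writing $y_{gg}\sim|t|^{-\nu}$ for some positive rational $\nu$ and controlling the other coordinates similarly), to a one-variable integral whose convergence is governed by the rate at which $y_{gg}^{-1}$ pulls the integrand down relative to the growth of the other coordinates along the branch; the bound should be uniform across branches, since these growth rates are controlled by the defining ideal of $C$ independently of $k$.
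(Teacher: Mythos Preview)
Your core estimate is sound: on $\{y_{gg}\le T_0\}\cap S_g$ the matrix $Y^{-1}$ has bounded operator norm, so $d\mu$ is comparable to the Euclidean metric and a Wirtinger--B\'ezout count gives $O(k)$.

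The cusp argument, however, has a genuine gap. Your heuristic is that ``$\omega$ decays in the $g$-th direction since the smallest eigenvalue of $Y^{-1}$ is $\asymp y_{gg}^{-1}$'', but only \emph{one} eigenvalue decays; the others need not. In $S_g$ one may have $y_{gg}\to\infty$ while $y_{11}$ remains bounded, and then the contribution $\asymp |dz_{11}|^2/y_{11}^2$ to $d\mu$ does not decay at all. So the integrand along a branch is not controlled by the one small eigenvalue. Moreover, the claimed uniform $O(1)$ per branch cannot be right as stated: even for the single term $|dz_{gg}|^2/y_{gg}^2$, a branch on which $z_{gg}$ has a pole of order $n$ is locally $n$-to-$1$ onto the cusp of $\Hh_1$, contributing $\asymp n$, not $O(1)$. (That still sums to $k$, but it is not the mechanism you describe.) Finally, your last sentence --- that the growth rates are ``controlled by the defining ideal of $C$ independently of $k$'' --- is circular: the ideal determines $C$ and hence $k$; the Puiseux exponents genuinely vary with the curve, and so does the radius $\epsilon$ needed for the branches to cover all of $\{y_{gg}>T_0\}\cap C$, which is exactly where uniformity is needed and not supplied.

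The paper avoids the core/cusp dichotomy and the branch analysis entirely. Its key step is an elementary pointwise estimate valid throughout $S_g$:
\[
d\mu(Z)\;\le\; O_g(1)\sum_{i,j}\frac{|dz_{ij}|^2}{y_{ii}\,y_{jj}},
\]
proved by bounding the entries of $Y^{-1}$ via Lemma~\ref{fundomain}. This reduces the problem to $g(g+1)/2$ scalar integrals. For each fixed $(i,j)$ one projects $C$ to the $z_{ij}$-coordinate; this map is at most $k$-to-$1$ since $\deg C=k$. For $i=j$ the image lies in the Siegel strip $\{|x_{ii}|\le\tfrac12,\ y_{ii}\ge\sqrt3/2\}\subset\Hh_1$, whose hyperbolic area is finite; for $i\ne j$ one uses $1/(y_{ii}y_{jj})\le\min(4/3,\,y_{ij}^{-2})$ and again the weighted planar area of the image region is finite. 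Either way the integral is $\le k\cdot O_g(1)$. No Puiseux expansions, no splitting into core and cusp, and no uniformity issues arise. If you want to salvage your approach, the missing ingredient is precisely this coordinatewise domination of $d\mu$ on $S_g$; once you have it, the projection argument is both simpler and sharper than tracking branches at infinity.
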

\begin{proof}

The main ingredient in the proof of the theorem is the following lemma.

\begin{lemma}

Within $S_g$, we have $$d\mu(Z)=Tr(Y^{-1}dZY^{-1}d\overline{Z})\leq O_g(1)\cdot
\sum_{i,j}\frac{|dz_{ij}|^2}{y_{ii}y_{jj}}$$

\end{lemma}

\begin{proof}

By Lemma \ref{fundomain} we have that $|Y|\gg \prod_{i=1}^g y_{ii}$,
and $$\forall{i\neq j},|y_{ij}|\leq {\rm min}(y_{ii},y_{jj})/2.\leqno{(*)}$$

Now, let $y'_{ij}$ denote the entries of $Y^{-1}$, and let $M_{ij}$ be the $(i,j)$'th
minor of $Y$. Then by (*) and the expansion of $M_{i,j}$ along columns we see that
$|M_{ij}|\ll \prod_{l\neq j} y_{ll}$, and thus
$|y'_{ij}|=|Y|^{-1}|M_{ij}|\ll y_{jj}^{-1}$. Likewise $|y'_{ij}|\ll y_{ii}^{-1}$.
Thus $y'_{ij}\leq |y_{ii}\cdot y_{jj}|^{-\frac12}$.
Thus we have
\begin{align*}
Tr(Y^{-1}dZY^{-1}d\overline{Z}) &= \sum_{i,j,m,l}y'_{ij}dz_{jm}y'_{ml}d\overline{z_{li}}\\
&\ll \sum_{i,j,m,l}|dz_{jm}dz_{li}|(y'_{ii}y'_{jj}y'_{mm}y'_{ll})^{-\frac12}\\
&\leq \sum_{i,j,m,l} \frac{|dz_{jm}|^2}{y_{jj}y_{mm}} + \frac{|dz_{li}|^2}{y_{ii}y_{ll}}
\end{align*}
which is what we wanted to show.
\end{proof}

Since the volume form for the conformal metric $|dz_{ij}|^2$ is $dx_{ij}dy_{ij}$,
by the lemma above, it is enough to show that for all $(i,j)$ we have
$$\int_{C\cap S_g} \frac{dx_{ij}dy_{ij}}{y_{ii}y_{jj}} \ll k.$$
We first consider the case $i=j$. In this case, consider the projection map
onto the $z_{ii}$ coordinate, $\pi_{ii}:\Hh_g\rightarrow\Hh_1.$
By Lemma \ref{fundomain} the image of $S_g$ under $\pi_{ii}$ is contained
in the Siegel set $$y_{ii}>\frac{\sqrt{3}}{2}, |x_{ii}|\leq\frac12.$$
Moreover, when the map $\pi_{ii}$ is restricted to $C$ it is either constant,
in which case the differential $dz_{ii}$ vanishes along $C$, or it has
finite fibers. In fact, since $C$ has degree $k$, the map
$\pi_{ii}:C\rightarrow \Hh_1$ is at most $k$ to $1$.
Thus we have
\[\int_{C\cap S_g} \frac{dx_{ii}dy_{ii}}{y_{ii}^2}\leq
 k\int_{y_{ii}=\sqrt{3}/2}^{\infty}\int_{x_{ii}=-\frac12}^{\frac12}\frac{dx_{ii}dy_{ii}
 }{y_{ii}^2}=\frac{2k}{\sqrt{3}}\]

We now consider the case of $i\neq j$. Here we use that since
$y_{ii},y_{jj}>\frac{\sqrt{3}}{2}$, we have $\frac{1}{y_{ii}y_{jj}}\leq
{\rm min}(\frac1{y_{ij}^2},\frac43)$.
Thus projecting to the $z_{ij}$ coordinate and reasoning as before, we get
\[\int_{C\cap S_g} \frac{dx_{ij}dy_{ij}}{y_{ii}y_{jj}}\leq
k\int_{y_{ij}=-\infty}^{\infty}\int_{x_{ij}=-\frac12}^{\frac12}
{\rm min}(\frac43,\frac{1}{y_{ij}^2})dx_{ij}dy_{ij}\ll k\] as desired.
\end{proof}

\subsection{Volumes of algebraic curves in weakly special subvarieties.}\label{curvsubv}

Let $G$ be a connected semi-simple algebraic subgroup of $\Sp_{2g}$ defined
over $\Q$, with no simple compact $\Q$-factors,  and $Z_0\in\Hh_g$ such that $\Hh_G:=G(\R)^+\cdot Z_0$ is a
weakly special subvariety with $G_0^{\rm ad}(\R)$ fixed by the
Cartan involution corresponding to $Z_0$. Our goal is to prove
a similar statement to
Theorem \ref{SmallCurves} with $\Hh_G$  replacing $\Hh_g$.

Let $V_G:=\pi_g(\Hh_G)$ be the weakly special subvariety of $\A_g$,
and let $$\Gamma_G\subset {\rm Sp}_{2g}(\Z)$$ be the lattice
which stabilizes $\Hh_G$. Necessarily, $\Gamma_G$ contains
$G(\Z)^+$, the intersection of $G(\R)^+$ with ${\rm Sp}_{2g}(\Z)$,
as a
subgroup of finite index. Note that $\pi_g^{-1}(V_G)$ is the (not disjoint)
union over $\gamma\in\Sp_{2g}(\Z)$
of $\gamma\Hh_G$. Now, consider $\pi_g^{-1}(V_G)\cap S_g$.
By Theorem \ref{define} this intersection is definable and thus has
only finitely many connected components.
Thus, there are finitely many elements
$\gamma_i\in\Sp_{2g}(\Z),1\leq i\leq m$ such that
$$\pi_g\left(\bigcup_{i=1}^m \gamma_i^{-1}\Hh_G\cap S_g\right)=V_G$$
and thus
$$\pi_g\left(\bigcup_{i=1}^m\Hh_G\cap \gamma_i S_g\right)=V_G.$$
\begin{lemma}
With notation as above,
$S_G^0:=\bigcup_{i=1}^m \gamma_i S_g \cap \Hh_G$
contains
a fundamental domain for the action of $\Gamma_G$.
\end{lemma}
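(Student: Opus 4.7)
The statement amounts to showing that $\Gamma_G\cdot S_G^0=\Hh_G$ up to a set of measure zero, from which one extracts a measurable fundamental domain $F\subset S_G^0$ in the standard way. Take $z\in\Hh_G$. Since $\pi_g(z)\in V_G=\pi_g\bigl(\bigcup_{i=1}^m\gamma_i^{-1}\Hh_G\cap S_g\bigr)$, there exist $i_0$ and $w\in\gamma_{i_0}^{-1}\Hh_G\cap S_g$ with $\pi_g(w)=\pi_g(z)$; thus $w=\tau z$ for some $\tau\in\Sp_{2g}(\Z)$, and applying $\gamma_{i_0}$ yields $\gamma_{i_0}\tau z\in\gamma_{i_0}S_g\cap\Hh_G\subset S_G^0$.

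It remains to show $\gamma_{i_0}\tau\in\Gamma_G$, so that $\gamma_{i_0}\tau z$ lies in the $\Gamma_G$-orbit of $z$. The key observation is that if $\eta\in\Sp_{2g}(\Z)\setminus\Gamma_G$, then $\eta\Hh_G$ and $\Hh_G$ are two distinct irreducible analytic subvarieties of $\Hh_g$ of the same complex dimension, so their intersection is a proper analytic subvariety of $\Hh_G$. Set
$$B:=\bigcup_{\eta\in\Sp_{2g}(\Z)\setminus\Gamma_G}\bigl(\eta\Hh_G\cap\Hh_G\bigr)\subset\Hh_G,$$
a countable union of proper analytic subvarieties of $\Hh_G$, hence of measure zero. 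For $z\in\Hh_G\setminus B$, any element of $\Sp_{2g}(\Z)$ sending $z$ into $\Hh_G$ must automatically stabilise $\Hh_G$, so the $\gamma_{i_0}\tau$ produced above lies in $\Gamma_G$ and the $\Gamma_G$-orbit of $z$ meets $S_G^0$.

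Since $\Gamma_G$ acts by isometries, $\Gamma_G\cdot B$ is again of measure zero, so $\Gamma_G\cdot S_G^0\supseteq\Hh_G\setminus(\Gamma_G\cdot B)$ has full measure in $\Hh_G$, which is the desired conclusion (all that is needed for the volume comparisons of \S\ref{curvsubv}). The main delicacy lies in the preceding step: the union $\pi_g^{-1}(V_G)=\bigcup_\gamma\gamma\Hh_G$ is not disjoint, so a point of $S_G^0$ lying above $\pi_g(z)$ is not automatically $\Gamma_G$-equivalent to $z$, and one must invoke the genericity of $z$ to rule out the measure-zero locus where distinct translates of $\Hh_G$ meet in a lower-dimensional analytic set.
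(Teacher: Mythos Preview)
Your argument establishes only that $\Gamma_G\cdot S_G^0$ has full measure in $\Hh_G$, not that every $\Gamma_G$-orbit meets $S_G^0$; so it does not prove the lemma as stated, and the subsequent assertion $G(\Z)^+\cdot S_G=\Hh_G$ in Theorem~\ref{SmallCurvesGen}(1) would likewise hold only up to measure zero. Your claim that this weaker form suffices for the later applications is not automatic: in the proof of Lemma~\ref{manydomains} one uses that \emph{every} point of the curve $C$ can be moved into $S_{G_0}$ by an element of $\Gamma_V$, and your exceptional set $B$ is a countable union of proper analytic subvarieties of $\Hh_{G_0}$, one of which could in principle contain $C$.

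The paper's proof avoids all of this with a simpler idea: compare translates of $\Hh_G$ rather than individual points. Given $Z\in\Hh_G$, choose any $\gamma\in\Sp_{2g}(\Z)$ with $\gamma Z\in S_g$. Then $\gamma\Hh_G$ is a translate of $\Hh_G$ meeting $S_g$; the $\gamma_i$ were selected (via the finiteness of components of the definable set $\pi_g^{-1}(V_G)\cap S_g$) so that the distinct translates meeting $S_g$ are exactly the $\gamma_i^{-1}\Hh_G$. Hence $\gamma\Hh_G=\gamma_i^{-1}\Hh_G$ as sets for some $i$, and therefore $g:=\gamma_i\gamma$ stabilises $\Hh_G$, i.e.\ $g\in\Gamma_G$. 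Then $gZ=\gamma_i(\gamma Z)\in\gamma_i S_g\cap\Hh_G\subset S_G^0$, and this works for every $Z$. The step you handled by genericity---passing from ``$\gamma_{i_0}\tau$ sends a point of $\Hh_G$ into $\Hh_G$'' to ``$\gamma_{i_0}\tau\in\Gamma_G$''---becomes immediate once one tracks the entire translate $\gamma\Hh_G$ rather than merely where the single point lands.
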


\begin{proof}

Indeed, let $Z\in \Hh_G$. Then as $S_g$ is a fundamental domain for
$\Sp_{2g}(\Z)$, there exists $\gamma\in\Sp_{2g}(\Z)$ such that
$\gamma\cdot Z\in S_g$. Now, as above this means that there is an
$i$ with $1\leq i\leq m$ such that
$\gamma\cdot Z\in \gamma_i^{-1} \cdot\Hh_G$ and
$\gamma\cdot \Hh_G=\gamma_i^{-1}\cdot \Hh_G$.
Thus there exists a $g\in\Gamma_G$ with
$\gamma=\gamma_i^{-1}\cdot g$. Thus
$\gamma_i^{-1}gZ\in S_g\cap \gamma_i^{-1}\Hh_G$ and
so $gZ\in \gamma_i S_g\cap \Hh_G\subset S_G^0$ as desired.
\end{proof}

Now, by picking coset representatives for $G(\Z)^+$ in $\Gamma_G$
we can find a finite union
of elements $\beta_j\in \Sp_{2g}(\Z)$ such that $S_G:=\cup_j \beta_jS_G^0\cap \Hh_G$
contains a fundamental domain for the action of $G(\Z)^+$ on $\Hh_G$.

Summarizing the above discussion, we arrive at the following theorem:

\begin{thm}\label{SmallCurvesGen}
There is a semi-algebraic set $S_G\subset\Hh_G$ such that

\begin{enumerate}
\item $G(\Z)^+\cdot S_G = \Hh_G$
\item For an irreducible algebraic curve $C\subset \Hh_G$ of degree
$k$, we have the bound
\[\int_{C\cap S_G} dC \ll k\]
where the implied constant depends on $G$
\item The projection map $\pi_g:S_G\rightarrow \A_g$ is definable.
\end{enumerate}
\end{thm}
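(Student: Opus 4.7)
The plan is to verify the three assertions directly from the explicit construction $S_G = \bigcup_{i,j} \beta_j \gamma_i S_g \cap \Hh_G$ developed in the paragraphs preceding the theorem.

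Assertion (1) is essentially already in hand. The preceding lemma shows that $S_G^0 = \bigcup_{i=1}^m \gamma_i S_g \cap \Hh_G$ contains a fundamental domain for $\Gamma_G$ acting on $\Hh_G$. Since the $\beta_j$ run over coset representatives for $G(\Z)^+ \backslash \Gamma_G$ (and $G(\Z)^+$ has finite index in $\Gamma_G$), the larger union $S_G$ contains a fundamental domain for $G(\Z)^+$, and hence $G(\Z)^+ \cdot S_G = \Hh_G$.

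Assertion (3) is a direct consequence of Theorem \ref{define}. Each piece $\beta_j \gamma_i S_g \cap \Hh_G$ is the intersection of a fixed $\Sp_{2g}(\Z)$-translate of $S_g$ with the semi-algebraic set $\Hh_G = G(\R)^+ \cdot Z_0$, hence definable. Since $\pi_g$ is $\Sp_{2g}(\Z)$-invariant, its restriction to $\beta_j \gamma_i S_g$ equals the composition of the semi-algebraic map $Z \mapsto (\beta_j \gamma_i)^{-1} Z$ with $\pi_g|_{S_g}$, and the latter is definable by Theorem \ref{define}. A finite union of definable maps is definable, giving (3).

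The substantive content is in (2). The strategy is to reduce to Theorem \ref{SmallCurves} piece by piece via the $\Sp_{2g}(\R)$-invariance of the volume form induced by $d\mu$: for any $T \in \Sp_{2g}(\R)$,
\[
\int_{C \cap T S_g} dC \;=\; \int_{T^{-1} C \cap S_g} d(T^{-1}C).
\]
The map $T^{-1}$ extends to a birational self-map of $\C^{g(g+1)/2}$ via the symplectic fractional linear formula $Z \mapsto (AZ+B)(CZ+D)^{-1}$, whose coordinate functions are rational of degree depending only on $T$. Consequently $T^{-1}C$ is an algebraic curve of degree $\ll_T k$. Applying Theorem \ref{SmallCurves} to $T^{-1}C$ and summing the resulting bounds over the finitely many $T = \beta_j \gamma_i$ (whose cardinality and individual degrees are determined by $G$) yields $\int_{C \cap S_G} dC \ll_G k$, as required.

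The only point requiring care is the degree bound for $T^{-1}C$ as a curve in $\C^{g(g+1)/2}$: one must verify that a fixed symplectic fractional linear transformation, regarded as a rational self-map of the ambient affine space, sends an algebraic curve of degree $k$ to one of degree bounded by a multiple (depending on $T$ alone) of $k$. This is a standard consequence of Bezout applied to pullbacks of generic hyperplane sections under a rational map of bounded degree, so I do not expect a genuine obstacle here.
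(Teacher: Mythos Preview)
Your proof is correct and follows exactly the paper's approach: assertion (1) is taken from the preceding discussion, (3) is reduced to Theorem~\ref{define}, and (2) is obtained by translating each piece back into $S_g$ via the $\Sp_{2g}(\R)$-invariance of the metric and invoking Theorem~\ref{SmallCurves}. Your care about the degree of $T^{-1}C$ is well placed---the paper simply asserts that $\gamma\cdot C$ is again ``algebraic of degree $k$,'' whereas your observation that the degree is only $\ll_T k$ (via B\'ezout for a fixed rational map) is the precise statement, and it is all that is needed.
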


\begin{proof} Assertion (1) was already proven in the discussion above.
For (2) we need only observe that if $C$ is algebraic of degree $k$,
then so is  $\gamma\cdot C$. Hence
\[\int_{C\cap S_G} dC \leq \sum_{i=1}^m\sum_{j=1}^n\int_{C\cap \beta_j\gamma_iS_g} dC=
\sum_{i=1}^m\sum_{j=1}^n\int_{(\beta_j\gamma_i)^{-1}C\cap S_g} d(\beta_j\gamma_i)^{-1}C\ll k\]
where the last inequality follows from Theorem \ref{SmallCurves}.
Finally, (3) follows easily from the fact that $\pi_g:S_g\rightarrow\A_g$
is definable, as per Theorem \ref{define}.
\end{proof}

\bigskip

\section{Volumes of algebraic curves near the boundary}

Take $C\subset M_g(\C)^{\rm sym}$ to be an irreducible
algebraic curve of
degree $k$ in $\Hh_g$. Then its Zariski closure $C^{\rm zar}$
must intersect the boundary $\partial\Hh_g$ in some
real algebraic curve $C_0$; to see that $C^{\rm zar}$ cannot be
contained in $\Hh_g$ recall that there is a birational
algebraic map taking
$\Hh_g$ to a bounded set.
Our goal in this section is to show that the volume of $C$
near $C_0$ is large.
This idea and its execution are due to Ullmo and Yafaev
\cite{UYHYPERBOLIC}.

\begin{thm}\label{boundarybound}
For  $M>1$ set $C_M:=\{Z\in C\mid h(Z)\leq M\}$.
Then \[M\prec\int_{C_M}dC.\]
\end{thm}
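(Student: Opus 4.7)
The plan is a local analysis at a boundary point of $C$, following the strategy of Ullmo--Yafaev \cite{UYHYPERBOLIC}: the hyperbolic metric on $\Hh_g$ blows up like the inverse square of the Euclidean distance to $\partial\Hh_g$, while the height $h$ blows up only like the inverse first power, so a thin Euclidean slab in $C$ adjacent to the boundary contributes hyperbolic area growing linearly in $M$.

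Work near a finite boundary point of $C^{\rm zar}$. Fix $p \in C_0 := C^{\rm zar} \cap \partial\Hh_g$ such that $C^{\rm zar}$ is smooth at $p$, $C_0$ is smooth at $p$, and $Y(p) = {\rm Im}(Z(p))$ has a simple zero eigenvalue. Each condition is Zariski-open on the nontrivial real algebraic curve $C_0$, so such a $p$ exists. Let $u \in \R^g$ be a unit vector spanning $\ker Y(p)$ and let $w$ be a local holomorphic coordinate on $C^{\rm zar}$ with $w(p)=0$. Expand $u^t Z(w) u = c_0 + c_1 w + O(w^2)$ with $c_0$ real; smoothness of $C_0 = \{{\rm Im}(u^t Z(w) u) = 0\}$ at $p$ forces $c_1 \neq 0$, and after rotating $w$ we may take $c_1 > 0$, so that with $w = s + it$ we have $u^t Y(w) u = c_1 t + O(|w|^2)$ and the side $\{Z(w) \in \Hh_g\}$ corresponds locally to $\{t > 0\}$. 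Since $Y(p)$ has simple zero eigenvalue, the smallest eigenvalue $\lambda(w)$ of $Y(w)$ satisfies $\lambda(w) \asymp t$ in a neighbourhood of $p$.

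The entries $z_{ij}(w)$ remain bounded near $p$, so $h(Z(w)) \asymp |Y(w)|^{-1} \asymp 1/t$; in particular $C_M$ contains the slab $\{|s| \leq r,\ a/M \leq t \leq r\}$ for suitable constants $a, r > 0$ independent of $M$. Using the identity $d\mu|_C = \|Y^{-1/2} Z'(w) Y^{-1/2}\|_F^2\,|dw|^2$ and bounding the Frobenius norm below by the $(v,v)$-entry for $v = v(w)$ the unit eigenvector of $Y(w)$ corresponding to $\lambda(w)$ (a perturbation of $u$), we obtain
\[ d\mu|_C \ \geq\ \frac{|v(w)^t Z'(w) v(w)|^2}{\lambda(w)^2}\,|dw|^2 \ \geq\ \frac{c}{t^2}\,|dw|^2 \]
for some constant $c > 0$. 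Integrating over the slab,
\[ \int_{C_M} dC \ \geq\ \int_{|s| \leq r,\ a/M \leq t \leq r}\frac{c}{t^2}\,ds\,dt\ \gg\ M, \]
which is the desired bound $M \prec \int_{C_M} dC$.

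The principal obstacle is the genericity argument ensuring the existence of a point $p \in C_0$ satisfying all four hypotheses (smoothness of $C^{\rm zar}$ and $C_0$, simplicity of the zero eigenvalue of $Y(p)$, non-degeneracy $c_1 \neq 0$): one has to verify that $C_0$ is not entirely contained in any of the corresponding singular or degenerate loci. The only essentially different case is when $C$ has no finite boundary point in $\C^{g(g+1)/2}$, approaching $\partial\Hh_g$ only at infinity; this is handled by first applying the Cayley transform to realise $\Hh_g$ as a bounded symmetric domain, so that infinity becomes a finite boundary point, and running the same local analysis there. Degenerate cases where $\lambda(w)$ vanishes to higher order yield a correspondingly stronger metric blow-up and still give polynomial growth in $M$.
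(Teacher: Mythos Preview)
Your proposal follows essentially the same strategy as the paper, which itself credits the idea to Ullmo--Yafaev \cite{UYHYPERBOLIC}: parametrize $C$ locally near a point of $C_0=C^{\rm zar}\cap\partial\Hh_g$, compare the rate at which $|Y|$ (and hence $h$) degenerates with the rate at which the hyperbolic area form blows up, and integrate over a slab approaching the boundary.

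The difference lies in how the two executions handle the local comparison. The paper writes $|Y|^2=(1-z\bar z)^{\lambda}\psi_1(z)$ with an unspecified exponent $\lambda>0$ and then invokes from \cite{UYHYPERBOLIC} the identity $\psi^*(\omega)=s\,\omega_\Delta+\eta$ for the K\"ahler form, with $s$ a positive integer and $\eta$ smooth up to the boundary; integrating $\omega_\Delta$ over $\{|z|<1-\delta\}$ gives $\gg 1/\delta$, and the result follows. This black-boxed K\"ahler comparison is what lets the paper avoid the genericity hypotheses you impose (simple zero eigenvalue of $Y(p)$, smoothness of $C_0$ at $p$, $c_1\neq 0$). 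Those hypotheses can genuinely fail everywhere on $C_0$: for instance if $C$ lies in the diagonal $\Hh_1\hookrightarrow\Hh_g$, $Z=z\cdot I_g$, then $Y(p)$ has a zero eigenvalue of full multiplicity $g$ at every boundary point. Your final paragraph correctly asserts that such degenerate cases still yield polynomial growth (indeed a stronger metric blow-up against a weaker height blow-up), but the explicit eigenvector computation you wrote out does not cover them; the paper's formulation with the free exponent $\lambda$ and the cited metric comparison handles all cases uniformly without case analysis.
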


\begin{proof}

We proceed as in \cite{UYHYPERBOLIC}.
Pick a smooth compact piece $I\subset C_0$, and a point $p\in I$.
For $0<\alpha<\beta< 2\pi$ set
$$\Delta_{\alpha, \beta}:=\{z=re^{i\theta}\mid 0\leq r<1,\alpha\leq \theta
\leq\beta\}$$
and
$$C_{\alpha, \beta}:=\{z=e^{i\theta}\mid \alpha\leq \theta\leq\beta\}.$$

We may find $\alpha, \beta$ and a real analytic map
$$\psi:\Delta_{\alpha,\beta}\rightarrow C$$
which extends to a real analytic function from a  neighbourhood of
$\Delta_{\alpha, \beta}\cup C_{\alpha, \beta}$ to $C^{\rm zar}$
such that $\psi(C_{\alpha,\beta})\subset \partial\Hh_g$.
Composing with $|Y|$
gives a real analytic function on a  neighbourhood of
$\Delta_{\alpha,\beta}\cup C_{\alpha, \beta}$ which is positive on
$\Delta_    {\alpha, \beta}$ and vanishes exactly when
$1-z\bar{z}$ vanishes. Thus, there exists $\lambda>0$ such that
$|Y|^2 = (1-z\bar{z})^{\lambda}\cdot\psi_1(z)$ where $\psi_1(z)$ is a
real analytic function that is positive on $\Delta_{\alpha,\beta}$
and which does
not vanish identically on $C_{\alpha,\beta}$. Thus by changing $\alpha$
and $\beta$ if necessary we can ensure that $\psi_1(z)$ is
non-vanishing on $C_{\alpha,\beta}$, so that
\begin{equation}\label{boundary1}
\log|Y|= \frac{\lambda}{2}\log(1-z\bar{z}) + O(1).
\end{equation}

Now, as in Ullmo-Yafaev we can also ensure that if
$\omega$ denotes the
K\"ahler form  on $\Hh_g$ and
%$\omega_\Delta=i\frac{dz\wedge d\bar{z}}{(1-|z|^2)^{2}}$
$\omega_\Delta=i{dz\wedge d\bar{z}}/{(1-|z|^2)^{2}}$
then
\begin{equation}\label{boundary2}
\psi^*(\omega)=s\omega_{\Delta}+\eta
\end{equation}
where $\eta$ is smooth in some neighbourhood of $C_{\alpha,\beta}$,
and $s$ is some positive integer.
Finally, for $\delta<1$ we set
$I_{\delta}=\Delta_{\alpha,\beta}\cap\{|z|<1-\delta\}$.

A computation gives we have
$$\int_{I_{\delta}} \omega_{\Delta} \gg \frac{1}{\delta}.$$
Combining this with equations \eqref{boundary1} and \eqref{boundary2}
gives the result.
\end{proof}

\section{Proof of Ax-Lindemann}

We can now prove the Ax-Lindemann
(or Ax-Lindemann-Weierstrass) theorem for $\A_g$.
We give a more general formulation applicable to semi-algebraic
subsets $W\subset\pi_g^{-1}(V)$. Such a set $W$
will be called {\it irreducible\/}
if it is not the union of two non-empty relatively closed
proper subsets in the topology
induced on it by the Zariski topology of algebraic sets defined over $\R$
(see \cite{GV}).

\begin{thm}\label{ALW}
Let $V\subset\A_g$ be an irreducible algebraic variety, and
suppose that $W\subset\pi_g^{-1}(V)$ is a connected irreducible
semi-algebraic subset of $\Hh_g$. Then there exists a weakly special
subvariety $S\subset V$ such that $W\subset\pi_g^{-1}(S).$
\end{thm}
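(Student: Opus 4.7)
The plan is to prove Theorem~\ref{ALW} by the o-minimal Pila--Wilkie strategy, combining the Counting Theorem with the arithmetic estimates of \S\S3--5 and the definability of $\pi_g|_{S_g}$ (Theorem~\ref{define}). First I would reduce to the case in which $W$ is a maximal irreducible complex algebraic subvariety of $\pi_g^{-1}(V)$: after replacing $V$ by $\overline{\pi_g(W)}^{\mathrm{zar}}$ and enlarging $W$ to such a maximal subvariety containing its complex Zariski closure, it suffices to show this upgraded $W$ is weakly special, for then $S := \pi_g(W) \subseteq V$ is weakly special by Lemma~\ref{UYS}. The case $\dim W = 0$ is handled separately (a point is trivially weakly special), so I assume $\dim W \geq 1$. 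Now consider
\[
\Sigma := \bigl\{g \in \Sp_{2g}(\R) : \emptyset \neq gW \cap S_g \subseteq \pi_g^{-1}(V) \cap S_g\bigr\},
\]
which is definable by Theorem~\ref{define}. Because $\pi_g^{-1}(V)$ is $\Sp_{2g}(\Z)$-invariant, every $\gamma \in \Sp_{2g}(\Z)$ with $\gamma W \cap S_g \neq \emptyset$ belongs to $\Sigma$.

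To count such integer points, pick an irreducible algebraic curve $C \subseteq W$ whose projective closure meets $\partial \Hh_g$ (available since $\Hh_g$ has a bounded algebraic realisation). Theorem~\ref{boundarybound} gives $\int_{C_T} dC \gg T^{\alpha}$ for some $\alpha > 0$; Theorem~\ref{SmallCurves} gives $\int_{\gamma C \cap S_g} dC \ll \deg C$ for every $\gamma$; and Lemma~\ref{movingbound} shows each $z \in C_T$ is carried into $S_g$ by an element $\gamma_z \in \Sp_{2g}(\Z)$ with $h(\gamma_z) \prec T$. Each such $\gamma_z$ lies in $\Sigma$, and the three estimates combine to yield $|\Sigma \cap \Sp_{2g}(\Z) \cap \{h \leq T\}| \gg T^{\alpha'}$ for some $\alpha' > 0$. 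The Pila--Wilkie counting theorem then produces a connected positive-dimensional semi-algebraic block $B \subseteq \Sigma$. For $g$ in the interior of $B$, the complex algebraic $gW$ is irreducible of the same dimension as $W$ with a nonempty Euclidean-open piece inside the closed analytic set $\pi_g^{-1}(V)$, so by analytic continuation $gW \subseteq \pi_g^{-1}(V)$ entirely; maximality of $W$ then forces $gW = W$. Hence $B$ lies in the stabiliser of $W$ inside $\Sp_{2g}(\R)$, and taking the Zariski closure of the counted integer points inside this stabiliser produces a positive-dimensional $\Q$-algebraic subgroup $H \subseteq \Sp_{2g}$ with $H \cdot W = W$.

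The final, and hardest, step is to upgrade this to weak specialness of $W$. Replacing $H$ by a maximal connected $\Q$-subgroup of $\Sp_{2g}$ preserving $W$ and extracting its semi-simple part $G_0$ with no compact $\Q$-factors, one must verify that $G_0(\R)^+ \cdot Z_0 = W$ for a generic $Z_0 \in W$ and that the image of $G_{0,\R}$ in $\Sp_{2g,\R}^{\mathrm{ad}}$ is fixed by the Cartan involution at $Z_0$; then Lemma~\ref{WeakChar} identifies $W$ as weakly special, and $S := \pi_g(W) \subseteq V$ is the desired weakly special subvariety by Lemma~\ref{UYS}. If the orbit $G_0(\R)^+ \cdot Z_0$ has dimension strictly less than $\dim W$, one would iterate the counting argument on the quotient $W / H$ (or on the fibres) using Lemma~\ref{MinimalWS} and the polynomial-height estimates to force termination. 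Controlling this iteration, together with producing the $\Q$-structure and verifying Cartan-involution compatibility, is the principal obstacle of the argument.
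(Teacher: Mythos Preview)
Your counting-and-stabiliser outline captures the broad shape of the argument, but there are two genuine gaps.

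First, the step ``maximality of $W$ then forces $gW = W$'' for $g$ in the block $B$ is false as stated. Maximality of $W$ means no strictly larger irreducible (semi-)algebraic set lies in $\pi_g^{-1}(V)$; it does \emph{not} say $W$ is the unique maximal such set, and indeed every $\Sp_{2g}(\Z)$-translate $\gamma W$ is also maximal and typically distinct from $W$. What one actually needs is that the block contains an integer point $\gamma$ (this is why the strong form of the Counting Theorem, \cite{PILAOAO}, Theorem~3.6, is invoked rather than Theorem~3.2): then $\gamma^{-1}B$ contains the identity, the set $(\gamma^{-1}B)\cdot W$ is a connected semi-algebraic subset of $\pi_g^{-1}(V)$ containing $W$, and maximality now legitimately yields $\gamma^{-1}B\subset\Theta:=\{g:gW=W\}$.

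Second, and more seriously, your final step (``iterate the counting argument on $W/H$ or on the fibres'') does not match how the argument actually closes, and I do not see how to make that sketch work. The paper is organised differently from the start: it inducts on $\dim V$, assumes $V$ minimal as well as $W$ maximal, and carries out the entire counting argument \emph{inside the monodromy group $G_0$ of $V$}, using the fundamental set $S_{G_0}$ of \S\ref{curvsubv} and Theorem~\ref{SmallCurvesGen} rather than $S_g$. The structural payoff is this: once a positive-dimensional connected $\Q$-group $H$ stabilising $W$ has been produced, the minimality of $V$ forces the whole component $Y$ of $\pi_g^{-1}(V)$---not just $W$---to be $H(\R)^+$-invariant (otherwise intersecting $V$ with a Hecke translate gives a strictly smaller $V'$ with $W\subset\pi_g^{-1}(V')$). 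Since $Y$ is also $\Gamma_V$-invariant, the group $H'$ generated by all $\Gamma_V$-conjugates of $H$ still preserves $Y$ and hence $W$; but $H'$ is then \emph{normal} in $G_0$, which forces a splitting $G_0^{\rm ad}=\prod_i G_i$, a corresponding product decomposition $U_0=\prod_i U_i$ of the ambient weakly special, and $W=\prod_{i\in I}U_i\times W'$. The induction hypothesis on $\dim V$, applied to a lower-dimensional slice $V^{\rm new}$, then handles $W'$. None of the essential ingredients---induction on $\dim V$ with minimality, the monodromy group, the Hecke-translate step, normality in $G_0$---appears in your outline, and without them the passage from ``$H$ stabilises $W$'' to ``$W$ is weakly special'' is not bridged.
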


\begin{proof}

We proceed by induction on the dimension $V$,
the case of $\dim(V)=0$ being
obvious, as all points are weakly special;
for the same reason we may assume $W$ has positive dimension.
We may assume without loss of generality that $W$ is maximal
(i.e. if $W'$ is semi-algebraic with $W\subset W'\subset \pi_g^{-1}(V)$
then $W'$ has $W$ as a component).
Likewise, we may
assume by our induction hypothesis that $V$ is minimal in    the sense
that there does not exist
$V'\subset V$ with $\dim V'<\dim V$ and $W\subset\pi_g^{-1}(V')$.
Note that since $W$ is maximal, by
Lemma 4.1 of \cite{PT}, $W$ must be a complex algebraic subvariety.
To prove the theorem we must show that $V$ is weakly special,
so that $W$ is an irreducible component of $\pi_g^{-1}(V)$, by
Lemma \ref{UYS}.

Take now $S_0$ to be the minimal weakly special
subvariety of $\A_g$ containing $V$ and write $U_0$
for the irreducible component of  $\pi_g^{-1}(S_0)$
containing $W$ (so we have to
show that $S_0=V$ and $W=U_0$).

Let $\Gamma_V\subset\Sp_{2g}(\Z)$ be the monodromy group of $V$,
and let $G_0$ be the connected
component of the Zariski closure of $\Gamma_V$.
Then by Lemma \ref{MinimalWS},
$G_0$ is a semi-simple
group over $\Q$, and for any point $u_0\in U_0$ we have
$U_0=G_0(\R)^+\cdot u_0$. Assume without loss of generality
that $W\cap S_{G_0}\neq \emptyset$.

Write $Y$ for a connected component of $\pi_g^{-1}(V)$ which
intersects the open part of $S_{G_0}$, and set $Y^0=Y\cap S_{G_0}$.
Note that $Y^0$ is definable (in $\R_{\rm an, \exp}$; \cite{PEST}). Write
$$X=\{\gamma\in G_0(\R)\mid \dim \big(\gamma\cdot W\cap Y^0\big)
=\dim W\},$$
which likewise is definable. Observe that $\gamma\cdot W\subset Y$
for all $\gamma\in X$ by the dimension assumptions
and analytic continuation.

For a set $A\subset \Sp_{2g}(\R)$ and a real number $T\ge 1$
define the counting function
$$
N(A,T)= \#\{M\in A\cap \Sp_{2g}(\Z): H(M)\le T\}.
$$
Here $H(M)$ is the multiplicative Weil height of $M$, i.e. the
maximum size of its integer entries, so that $H(M)=h(M)$ as
defined in 2.6 (as not all its entries can vanish).

\begin{lemma}\label{manydomains}
We have $T\prec N(X,T)$, implied constants depending on $X$.
\end{lemma}

\begin{proof}
Let $C\subset W$ be an algebraic curve. For $T>0$, define
$$C_T = \{Z\in C\mid h(Z)\leq T\}, \quad
X_T=\{\gamma\in X\mid h(\gamma)\leq T\}.$$
Since $S_{G_0}$ contains a fundamental domain,
for each point $Z\in C$ there exists a $\gamma\in\Gamma_V$ such that
$\gamma\cdot Z\in Y^0$. By Lemma \ref{movingbound} there exists
$M>0$ such that $H(\gamma)\leq h(Z)^M$ for sufficiently large $h(Z)$.
Hence for $T\gg 1$, we must have
$$
C_{T^{\frac1M}} \subset
\bigcup_{\gamma\in X_{T}\cap\Gamma_V} \gamma^{-1}S_{G_0}.$$
In particular, we have that
$$\textrm{Vol}(C_{T^{\frac1M}}) \leq
\sum_{\gamma\in X_{T}\cap\Gamma_V}
{\rm Vol}(\gamma\cdot C\cap S_{G_0}).
$$
Combining Theorems \ref{boundarybound} and \ref{SmallCurvesGen}
now gives the result.
\end{proof}

Since $N(X,T)$ grows faster than some positive power of $T$,
the Counting Theorem  (in the form \cite{PILAOAO}, Theorem 3.6)
implies that there are semi-algebraic varieties $W_1\subset X$ of
positive dimension containing arbitrarily many points
$\gamma\in\Gamma_V$
such that $W_1\cdot W\subset Y$. To see this, supposing
$N(X,T)\gg T^\eta$ with $\eta>0$, apply \cite{PILAOAO}, Theorem 3.6
with $\mu=0, k=1$, and $0<\epsilon<\eta$. As the $\gg T^\eta$ rational
points up to height $T$ are contained in $\ll T^\epsilon$ blocks
(as defined there) provided
by the theorem, there must be a block containing
$\gg T^{\eta-\epsilon}$ rational
points. Note that the simpler version \cite{PILAOAO}, Theorem 3.2
(which is \cite{PW}, Theorem 1.9 in the case $k=1$)
does not give this conclusion.
For such $W_1$ and $\gamma\in W_1\cap\Gamma_V$ we have
$\gamma^{-1}W_1\cdot W$ contains $W$, and so by our
maximality assumption we have $$\gamma^{-1}W_1\cdot W = W.$$

Now let $\Theta$ be the algebraic group over $\R$ which is the
Zariski closure in $G_{0,\R}$ of the subset
$$
\{\gamma\in G_0(\R)\mid \gamma\cdot W=W\}.
$$
Let $\Theta^0$ be its connected component. Since
$\gamma^{-1}W_1\subset \Theta$, it follows that $\Theta^0$
has positive dimension.
%Thus $\Theta^0(\R)^+\cdot W\subset W$, though maybe not
%$\Theta^0(\R)^+\cdot W = W$?????
Let $H$ be the maximal connected  algebraic
subgroup of $G_0$ defined over $\Q$ such that
$H(\R)\subset\Theta^0(\R)$. We know that  $\Theta^0$ has
infinitely many rational (in fact integral) points, hence so does
$H(\R)$, and so $\dim(H(\R))>0$.

Suppose that $Y$ is NOT invariant under $H(\R)^+$.
Since $H(\Q)$ is dense in $H(\R)$ (see e.g. \cite{SANSUC}, Cor. 3.5)
we can find an element
$h\in H(\Q)^+$ such that $Y$ is not invariant under $h$.
Let $Y'=Y\cap hY$. Then $\pi_g(Y')$ is a closed algebraic
proper subvariety $V'\subset V$ with and $W\subset Y'$
(in fact $V'$ is a component of the intersection of
$V$ with one of its Hecke translates).
This contradicts our minimality assumption on $V$,
hence $Y$ is invariant under $H(\R)^+$.
%By our maximality assumption on $W$ we must have $W=Z_0'$.
%But then $Z_0'$ and
%$\pi_g(Z_0')$ are semi-algebraic, and hence $Z_0'$ is
%weakly special by \cite{UY}.
%Hence from now on
%we assume that $Y$ is invariant under $H$.

Let $H'$ be the smallest algebraic subgroup of $G_0$ containing
the conjugates of $H$ by all $\gamma\in\Gamma_V$.
Since $Y$ is invariant under $H(\R)^+$ and under $\Gamma_V$ it is
invariant under $H'(\R)^+$. Now $H'$ is an algebraic group
that is invariant by conjugation under $\Gamma_V$,
and hence also under $G_0$, the Zariski closure of $\Gamma_V$.
Hence $H'$ is normal in $G_0$.
Note that $H'(\R)^+\cdot W\subset Y$ so by
maximality of $W$ we conclude that $H'(\R)^+\cdot W=W$,
so that $H'=H$.

Next consider the map $\phi:G_0\rightarrow G_0^{\rm ad}$, where
$G_0^{\rm ad}$ is the adjoint form of $G_0$. We can therefore write
$G_0^{\rm ad}=\prod_{i=1}^r G_i$ where the $G_i$ are $\Q$-simple
algebraic groups. Therefore there is some non-empty subset
$I\subset\{1,\dots,r\}$ such that $$\phi(H)=\prod_{i\in I} G_i.$$

We write $U_0\cong\prod_i U_i$ where the $U_i$ are
hermitian symmetric
spaces associated to $G_i$. Thus $W$ can be written as
$\prod_{i\in I}U_i\times W'$, where $W'$ is an irreducible algebraic
subvariety of $\prod_{i\in I^c}U_i$.
If $W'$ is a point then $\pi_g(W)$ is
weakly special, and so we must have $W=U_0$ as desired. Hence, we assume from now on that
$\dim(W')>0$.

%For a subset $J\subset\{1,2,\dots,r\}$ define $G^J$ to be the
%identity component of $\phi^{-1}\left(\prod_{j\in J} G_j\right)$.
Now pick any point $u_I\in\prod_{i\in I} U_i$ and consider the variety
$$W^{\rm new}= u_I\times W'.$$
Moreover, let
$$V^{\rm new} = V\cap \pi_g(u_I\times\prod_{j\in I^c}U_j),$$
which is evidently an algebraic variety as $\prod_{j\in I^c}U_j$ is
weakly special.
Then $$\pi_g(W^{\rm new})\subset V^{\rm new}$$ and so by our
induction hypothesis there is a weakly special variety
$$S^{\rm new}\subset u_I\times \prod_{j\in I^c}U_j$$ such that
$\pi_g(S^{\rm new})\subset V^{\rm new}$ and
$W^{\rm new}\subset S^{\rm new}$.
By projection, we can identify $S^{\rm new}$ with its image in
$\prod_{j\in I^c}U_j$.
Since weakly special varieties are closed under intersection
(Corollary  \ref{INT}) we can consider
$S_{\rm min}\subset\prod_{j\in I^c}U_j$
to be the minimal weakly special variety containing $W'$.
By the discussion above,
for each point $u_I\in\prod_{i\in I}U_i$ we know that
$\pi_g(u_I\times S_{\rm min})\subset V$. In other words, the variety
$$S=\prod_{i\in I} U_i\times S_{\rm min}$$ is weakly special and
satisfies both $W\subset S$ and $\pi_g(S)\subset V$. By our maximality assumption,
we conclude that $W=S=U_0$ and $V=S_0$ as desired.
%
%
%we can find a variety isomorphic to $W'$ - which we shall also call $W'$
%by abuse of notation - such that $W'\subset W$ and is contained in a
%weakly special subvariety which is in a single $G^{I^c}$ orbit,
%say $G^{I^c}\cdot w$.
%f
%Define $V_1:=V\cap\pi_g(G^{I^c}\cdot w)$, and $S_{G^{I^c}}$ as in
%Theorem \ref{SmallCurvesGen}. Then $V_1$ is an algebraic variety.
%Write $Y_1$ for the connected component of $\pi_g^{-1}(V_1)$
%which intersects $S_{G^{I^c}}$, and set $Y_1^0:=Y_1\cap S_{G^{I^c}}$.
%Finally, define $$X_1=\{\gamma\in G^{I^c}(\R)\mid
%\dim\big(\gamma\cdot W'\cap Y_1^0\big)=\dim W'\}.$$
%\begin{lemma}\label{manydomainsgen}
%We have $T\prec N(X_1,T)$ \red{(implied constants depend on $X_1$)}.
%\end{lemma}
%\begin{proof}
%This is proved in the same way as Lemma \ref{manydomains}.
%but using Theorem \ref{SmallCurvesGen} instead of Theorem \ref{SmallCurves}.
%\red{We do not require that $W'$ is maximal. We can assume that the minimal
%special subvariety containing $W'$ is an orbit $G^{I^c}\cdot w$ of $G^{I^c}$,
%and also that $V_1$ is the minimal algebraic variety containing the
%$\pi_g$-image this orbit,
%as assuming these just makes the set $X_1$ smaller. Now we take a curve
%$C\subset W'$ and repeat the proof of  Lemma \ref{manydomains}.
%}
%\end{proof}
%
%Arguing as before, we get a positive dimensional subgroup
%$H_1\subset G^{I^c}$ defined over $\Q$ which is normal such that
%$H_1W'=W'$. But then $HH_1$ must stabilize $W$, which contradicts
%the fact $H$ is maximal algebraic group defined over $\Q$ which
%stabilizes $W$. This completes the proof.
\end{proof}

\bigbreak

\section{Application to Andre-Oort}

In this section we give the application to the Andre-Oort conjecture.
As already mentioned, the size of the Galois orbit of a special point plays
a crucial role.
For a point $x\in\A_{g}$ let $A_x$ denote the corresponding
$g$-dimensional
principally polarized abelian variety,
$R_x=Z({\rm End}(A_x))$ the centre of the endomorphism ring of $A_x$,
and ${\rm Disc\/}(R_x)$ its discriminant
(for further details see \cite{JT}).
In general one expects
to have the following lower bound suggested by Edixhoven in
\cite{EMO}.

\begin{conj}
Let $g\ge 1$. Then, for a special point $x\in\A_{g}$,
$$
|{\rm Disc\/}(R_x)| \prec |{\rm Gal}(\overline{\Q}/\Q)\cdot x|\
$$
(with the implied constants depending on $g$).
\end{conj}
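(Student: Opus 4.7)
The plan is to attack this through the theory of complex multiplication combined with Brauer--Siegel style estimates for class numbers. Since $x$ is special, the abelian variety $A_x$ has complex multiplication: its endomorphism algebra contains a CM algebra $E$ of degree $2g$ over $\Q$, with totally real subfield of index $2$, and $R_x$ is an order in the center $F$ of $E$. After enlarging the base field by a bounded amount (passing to the reflex field and a field of definition for the polarization and level), the conjecture reduces to a lower bound for the Galois orbit of $x$ over a reflex field whose degree is polynomially bounded in terms of data attached to $x$.

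The first step is to make the Galois action concrete via the main theorem of complex multiplication of Shimura--Taniyama. Over the reflex field $E^*$, the Galois group acts on the set of CM abelian varieties with endomorphism ring (containing) $R_x$ and a fixed CM type through a reflex norm homomorphism from the idele class group of $E^*$. The upshot is that this action is transitive on an appropriate isogeny class, and the orbit size is bounded below by the size of a class group of $R_x$ (more precisely, the Picard group $\mathrm{Pic}(R_x)$, which for a non-maximal order differs from the class number of $F$ by a controlled conductor factor computable from the index $[\Oo_F:R_x]$).

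The second step is to bound this class number from below in terms of $|\Disc(R_x)|$. For the maximal order case, Brauer--Siegel gives
\[
h(F)\cdot \mathrm{Reg}(F) \gg |\Disc(F)|^{1/2-\epsilon},
\]
and for CM fields the regulator $\mathrm{Reg}(F)$ is controlled (it equals, up to a bounded factor, the regulator of the totally real subfield, which one bounds above polynomially in $|\Disc(F)|$ using upper bounds on unit heights, e.g.\ via Minkowski-type arguments). This yields $h(F)\gg |\Disc(F)|^{1/2-\epsilon}$. The non-maximal case is handled by relating $|\mathrm{Pic}(R_x)|$ to $h(F)$ through the standard conductor exact sequence, which contributes a factor polynomial in the conductor and hence polynomial in $|\Disc(R_x)|/|\Disc(F)|$.

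The main obstacle, and the source of the restriction to $g\le 6$ in Theorem~\ref{1.2}, is the ineffectivity of Brauer--Siegel: the implied constant depends on possible Siegel zeros of the Dedekind zeta function of $F$, and this dependence degrades as $[F:\Q]=2g$ grows. Under GRH for the $L$-functions of CM fields, Siegel zeros are excluded outright and one obtains the conjecture uniformly in $g$. Unconditionally, one must use that for small $g$ the degree of $F$ is bounded enough that zero-free regions (via Stark/Odlyzko-style arguments, as carried out in \cite{JT}) suffice to control the Siegel zero contribution. Getting past $g=6$ unconditionally would require a genuine breakthrough on Siegel zeros for CM fields of moderate degree.
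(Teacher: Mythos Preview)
This statement is stated in the paper as a \emph{conjecture}, not as a theorem; the paper gives no proof of it, but instead cites \cite{JT} for the known cases ($g\le 6$ unconditionally, and all $g$ under GRH) and makes clear that the general statement remains open. There is therefore no ``paper's own proof'' to compare your proposal against, and what you have written cannot be a proof of the conjecture as stated --- indeed, your final paragraph acknowledges this.

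Viewed as a sketch of the strategy in \cite{JT}, your outline has the right shape but contains a genuine technical gap. You propose to obtain $h(F)\gg|\Disc(F)|^{1/2-\epsilon}$ from Brauer--Siegel by bounding the regulator of the totally real subfield $F_0$ from above ``polynomially in $|\Disc(F)|$ using upper bounds on unit heights, e.g.\ via Minkowski-type arguments.'' No such polynomial upper bound on $\mathrm{Reg}(F_0)$ is available in the generality and strength required; this is precisely the obstruction that a naive application of Brauer--Siegel runs into. The argument in \cite{JT} circumvents this by working not with the class number of the CM field directly but with the class number of the Mumford--Tate torus of the special point, and invoking a Brauer--Siegel theorem for arithmetic tori (Shyr's formula). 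In effect this replaces $h(F)$ by a quantity comparable to $h(F)/h(F_0)$, so that the problematic regulator of $F_0$ cancels rather than needing to be bounded. What then remains is controlling a possible Siegel zero of the relative $L$-function $L(s,\chi_{F/F_0})$, and it is this --- not ``ineffectivity of Brauer--Siegel'' per se --- that forces either GRH or the restriction to $g\le 6$.
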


\noindent

For $g=1$ this is known by the theory of complex multiplication
of elliptic curves (see e.g. \cite{CM}) and the Landau-Siegel
lower bound for the class number of an imaginary quadratic field
\cite{LANDAU, SIEGELAA}.
As already mentioned, the second author \cite{JT}
has affirmed this conjecture

(1) for $g\le 6$, and

(2) for all $g$ under GRH for CM fields (see also Ullmo-Yafaev
\cite{UYGB}).

\noindent
All the unconditional lower bounds are ineffective;
hence Theorem 1.2,
which we now establish by proving Theorem 7.1 below, is ineffective
(other aspects of the proof are also ineffective,
including the Counting Theorem, though this ineffectivity
seems less serious).
The only non-trivial case of the Andre-Oort conjecture known
unconditionally and effectively
is that of the product of two modular curves, due independently
to Kuhne \cite{KUHNE} and Bilu-Masser-Zannier \cite{BILUMZ}
(Andr\'e's original proof \cite{AA} is ineffective).

\begin{thm}
Suppose Conjecture 7.1 holds for $g$.
Let $V$ be an irreducible closed algebraic subvariety of $\A_g$.
Then $V$ contains only finitely many
maximal special subvarieties.
\end{thm}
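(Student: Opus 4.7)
The plan is to follow the Pila-Zannier strategy, combining the Counting Theorem of Pila-Wilkie, Ax-Lindemann (Theorem \ref{ALW}), the height upper bounds for preimages of special points from \cite{PT}, and the Galois lower bound provided by Conjecture 7.1. I induct on $\dim V$; the case $\dim V = 0$ is trivial. After replacing $V$ with the union of its finitely many Galois conjugates and then intersecting with the minimal weakly special subvariety of $\A_g$ containing it, we may assume $V$ is Hodge-generic in this ambient subvariety --- the case in which $V$ sits inside a proper weakly special is handled by applying the induction hypothesis within that subvariety.

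Suppose for contradiction that $V$ contains an infinite sequence $S_1, S_2, \ldots$ of maximal special subvarieties. For each $n$, choose a special point $x_n \in S_n$ such that $S_n$ is the smallest special subvariety of $\A_g$ of its Mumford-Tate type containing $x_n$ (a generic CM point of $S_n$); by Northcott-type finiteness of special points of bounded complexity, we may assume $\Delta_n := |\Disc(R_{x_n})| \to \infty$. Choose a preimage $z_n \in S_g$ of $x_n$. By \cite{PT}, $h(z_n) \prec \Delta_n^A$, and each Galois conjugate $x_n^\sigma$ can be written as $\pi_g(\gamma_\sigma \cdot z_n)$ for a rational symplectic matrix $\gamma_\sigma$ of height polynomially bounded in $\Delta_n$, via an appropriate Hecke correspondence. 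By Conjecture 7.1 the Galois orbit has size $\gg \Delta_n^\delta$ for some $\delta > 0$, producing $\gg \Delta_n^\delta$ rational points of height $\prec \Delta_n^B$ in the definable set
\[
Y_n := \{ \gamma \in \Sp_{2g}(\R) : \gamma \cdot z_n \in S_g, \ \pi_g(\gamma \cdot z_n) \in V \}.
\]

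Applying the Counting Theorem (\cite{PILAOAO}, Theorem 3.6) with $\epsilon$ sufficiently small, these rational points are contained in $\ll \Delta_n^{B\epsilon}$ semi-algebraic blocks of $Y_n$, so for large $n$ some positive-dimensional block $\mathcal{B}_n \subset Y_n$ contains many Hecke elements $\gamma_{\sigma_i}$. Then $\mathcal{B}_n \cdot z_n$ is a positive-dimensional semi-algebraic subset of $\pi_g^{-1}(V)$, and by Ax-Lindemann it is contained in $\pi_g^{-1}(T_n)$ for a positive-dimensional weakly special $T_n \subset V$. Since $T_n$ contains special points, it is special (\cite{MOONEN}). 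Galois equivariance places the conjugates $x_n^{\sigma_i}$ inside distinct conjugate maximal special subvarieties $S_n^{\sigma_i} \subset V$, and the genericity of $x_n$ in $S_n$ implies that any special subvariety containing two such distinct conjugates must strictly contain each $S_n^{\sigma_i}$, contradicting maximality.

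The main obstacle is this last implication: one must ensure that the block $\mathcal{B}_n$ produced by Pila-Wilkie is not contained within a single Galois translate of $S_n$ but genuinely spreads across several, so that $T_n$ strictly enlarges each $S_n^{\sigma_i}$ rather than merely coinciding with one. This requires (i) the generic choice of $x_n$ within $S_n$, and (ii) using the structural description of weakly special subvarieties of $\A_g$ (Lemma \ref{WeakChar}) together with Galois equivariance to certify that a special subvariety spanning conjugates across distinct $S_n^{\sigma_i}$ must be properly larger than each. The super-polynomial lower bound from Conjecture 7.1 measured against the polynomial block count from Pila-Wilkie is what ultimately forces the existence of such a spreading block.
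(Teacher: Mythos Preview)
Your outline correctly identifies the Pila--Zannier ingredients and tracks the paper's argument up to the point where one shows that all but finitely many special points of $V$ lie in some positive-dimensional special subvariety of $V$. (A technical remark: the paper counts algebraic points of bounded degree on the fixed definable set $Y^0=\pi_g^{-1}(V)\cap S_G$, rather than rational points on the $n$-dependent sets $Y_n$ you define; uniformity of the Pila--Wilkie constants over such a family is not automatic.)

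The genuine gap is in your endgame. You acknowledge that the ``main obstacle'' is to show that a Pila--Wilkie block $\mathcal{B}_n$ must spread across preimages of \emph{distinct} conjugates $S_n^{\sigma}$, but you do not actually resolve it; the sentence ``the super-polynomial lower bound \ldots\ measured against the polynomial block count \ldots\ forces the existence of such a spreading block'' is not an argument. The difficulty is real: the field of definition of $S_n$ may be much smaller than that of $x_n$, so arbitrarily many conjugates $x_n^{\sigma}$ can lie in a single $S_n^{\sigma}$, and a block containing many $\gamma_{\sigma}$ may sit entirely inside $\pi_g^{-1}(S_n^{\sigma})$. Your appeal to ``genericity of $x_n$'' does not help, since genericity only tells you that any special subvariety containing $x_n^{\sigma}$ contains $S_n^{\sigma}$; it does not prevent $T_n=S_n^{\sigma}$.

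The paper takes an entirely different route for this step and does \emph{not} attempt your spreading argument. After reducing to finiteness of positive-dimensional maximal special subvarieties, it parametrises weakly special subvarieties by the semi-simple $\Q$-groups $F\in\mathcal{C}$ whose real orbits they are, and for each $F$ analyses the normaliser $N(F)$: if $N(F)_{\rm sh}\neq G$ one invokes Lemma~\ref{SpFam} and induction; if $N(F)_{\rm sh}=G$ then $F\lhd G$ and one uses the product decomposition of the symmetric space together with induction on the factor. Finiteness over all $F$ is then obtained by a second, independent use of o-minimality: the union of positive-dimensional weakly special subvarieties of $V$ is shown to be simultaneously a countable union of closed subvarieties (Lemma~\ref{CountVar}) and a definable set (Lemma~\ref{defweak}), hence by a cell-decomposition argument a finite union. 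This structural/definability step is the substance you are missing.
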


Before proving this theorem we need some lemmas, in which we take
$V\subset\A_g$ to be an irreducible closed algebraic subvariety.
We do not for the moment assume Conjecture 7.1.

We set $G\subset {\rm GSp}_{2g, \Q}$ to be the
generic Mumford-Tate group of an irreducible component $Z$ of
$\pi_g^{-1}(V)$. Then, for any $z\in Z$,  $\pi_g(G(\R)^+\cdot z)$ 
is the smallest special subvariety containing $V$.
 We may assume that $V$ contains special points, so that this is also the smallest such weakly special subvariety.
Let $S_G$ be as defined above Theorem 4.4, and recall it contains
a fundamental domain for $G(\Z)^+$.
Let $Y^0=\pi_g^{-1}(V)\cap S_G$,
which, as we have already observed in Theorem 4.4, is a definable set.

We consider orbits $H(\R)^+\cdot z, z\in\Hh_g$ that lie inside $\pi_g^{-1}(V)$,
where $H\subset{\rm Sp}_{2g, \R}$ is a semi-simple group.
As such orbits are real algebraic, we observe that if
$$
\dim H(\R)^+\cdot z=
\dim\left( H(\R)^+\cdot z \cap\left(\pi_g^{-1}(V)\cap S_G\right)\right)
$$
then, by analytic continuation, we have
$H(\R)^+\cdot z\subset\pi_g^{-1}(V)$.
We call such an orbit {\it maximal\/} if it is not contained in an orbit
$K(\R)^+\cdot z$, where
$K\subset{\rm Sp}_{2g, \R}$ is semi-simple, $K(\R)^+\cdot z$ is
contained in $\pi_g^{-1}(V)$ and has larger dimension than
$H(\R)^+\cdot z$.
By Ax-Lindemann (\ref{ALW}), every maximal orbit is weakly-special,
and Lemma \ref{WeakChar} implies that there is a connected,
semi-simple $\Q$-subgroup $F\subset G$ with no compact $\Q$-factors,
such that $F(\R)^+\cdot z=H(\R)^+\cdot z$.

\begin{definition*} We set $\mathcal{C}$ to be the set of all connected semi-simple $\Q$-subgroups $F\subset G$ which have no
compact $\Q$-factors, and such that there exists $y\in\Hh_g$ for which $F(\R)^+\cdot y$ is weakly special.
 \end{definition*}
 
 For $F\in\mathcal{C}$ and for every weakly special subvariety $F(\R)^+\cdot y\subset\Hh_g$ there
is a Shimura subdatum $(H,X_H)\subset (G,X_G)$ and a splitting
$$
(H^{\rm ad}, X^+_{H^{\rm ad}}, \Gamma)=
(H_1, X_1^+, \Gamma_1)\times(H_2, X_2^+, \Gamma_2),
$$
and a point
$h_2\in X^+_2$
such that $X^+_1\times\{h_2\}=F(\R)^+\cdot y$.

Consider the group
$$\Gamma':=\{\gamma\in G(\Z)^+\mid \gamma\cdot F(\R)^+\cdot y=
F(\R)^+\cdot y\}.$$
This group has the property that
$\Gamma'\backslash F(\R)^+\cdot y$ is finite volume.
However, letting $\widetilde{H_1}$ denote the pullback of $H_1$ to $H$, the subgroups $F(\Z)^+$ and $\widetilde{H_1}(\Z)^+$ also have this
property, and hence their images in $G^{\rm ad}$ are commensurable.
Since neither $F$ nor $\widetilde{H_1}$ have compact $\Q$-factors
and both are connected,
$F\cdot Z(\widetilde{H_1})=\widetilde{H_1}$
by \cite{PR}, Theorem 4.10. Letting $N(F)$ denote the
connected component of the normalizer of $F$ in $G$, it follows that $H\subset N(F).$

\begin{lemma}
Let $F\in\mathcal{C}$.
Then $N(F)$ is a reductive $\Q$-group.

\end{lemma}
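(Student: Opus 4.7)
The plan is to decompose $N(F) = F \cdot Z_G(F)^\circ$, where $Z_G(F)$ is the centralizer of $F$ in $G$, and then to appeal to the reductivity of each factor. Since $F$ and $G$ are defined over $\Q$, the scheme-theoretic normalizer $N_G(F)$ is a closed $\Q$-subgroup of $G$, and its connected component $N(F)$ inherits this rational structure automatically; the real content of the lemma is reductivity.

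For the decomposition, conjugation provides a $\Q$-morphism $\alpha\colon N_G(F)\to\text{Aut}(F)$ with kernel $Z_G(F)$. Because $F$ is semi-simple, $\text{Aut}(F)^\circ = \text{Inn}(F)\cong F/Z(F)$ with $\text{Aut}(F)/\text{Inn}(F)$ finite. The image $\alpha(N(F))$ is connected and contains $\alpha(F) = F/Z(F)$, hence equals all of $\text{Inn}(F)$. The subgroup $F\cdot Z_G(F)^\circ$ is connected, contained in $N(F)$, and its cokernel inside $N(F)$ embeds into the finite group $\text{Aut}(F)/\text{Inn}(F)$; by connectedness of $N(F)$, the two must coincide.

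For reductivity, $G$ is reductive as a Mumford-Tate group of a polarizable Hodge structure, and $F$ is semi-simple hence reductive. I would then invoke the classical theorem (valid in characteristic zero) that the centralizer of a reductive subgroup of a reductive group is reductive; this gives that $Z_G(F)^\circ$ is reductive. Consequently $N(F) = F\cdot Z_G(F)^\circ$ is the image of the reductive group $F\times Z_G(F)^\circ$ under the multiplication map, whose kernel is the finite central subgroup $F\cap Z_G(F)^\circ\subseteq Z(F)$. Since a quotient of a reductive group by a finite central subgroup is reductive, the conclusion follows.

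The one genuinely non-trivial ingredient is the reductivity of $Z_G(F)$, which I would simply cite as a standard result from the theory of reductive groups; everything else is a routine structural manipulation, the only subtlety being the finiteness of $\text{Aut}(F)/\text{Inn}(F)$, which holds because $F$ is semi-simple. I expect no serious obstacle along the way.
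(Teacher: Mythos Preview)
Your argument is correct and reaches the conclusion by a genuinely different route from the paper's. The paper exploits the hypothesis $F\in\mathcal{C}$ directly: since some $F(\R)^+\cdot y$ is weakly special, the Cartan involution ${\rm ad}\,y(i)$ of $G^{\rm ad}_\R$ preserves the image of $F_\R$, hence preserves $N(F)_\R$, and then one invokes the criterion (Satake, \S I, 4.3) that a subgroup of a reductive real group stable under a Cartan involution is itself reductive. Your proof, by contrast, never uses the weakly-special hypothesis at all: it shows that for \emph{any} connected semi-simple $\Q$-subgroup $F$ of a reductive $\Q$-group $G$ the identity component of the normalizer is reductive, via the decomposition $N(F)=F\cdot Z_G(F)^\circ$ and the standard characteristic-zero fact that the centralizer of a reductive subgroup of a reductive group is reductive. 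So your statement is strictly more general, at the cost of importing a structural theorem whose typical proof (via compact real forms) is essentially the Cartan-involution argument the paper carries out by hand.

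One minor imprecision worth fixing: the quotient $N(F)/(F\cdot Z_G(F)^\circ)$ does not literally embed into ${\rm Aut}(F)/{\rm Inn}(F)$, since $\alpha(N(F))$ already sits inside ${\rm Inn}(F)$ and the induced map to ${\rm Out}(F)$ is trivial. The correct finiteness comes instead from the component group of the centralizer: because $\alpha(N(F))\subset{\rm Inn}(F)$ one has $N(F)\subset F\cdot Z_G(F)$, and then $N(F)/(F\cdot Z_G(F)^\circ)$ is a quotient of $\pi_0(Z_G(F))$, which is finite. Connectedness of $N(F)$ then forces the quotient to be trivial, as you intended.
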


\begin{proof}

Let $y\in\Hh_g$ be such that $F(\R)^+\cdot y$ is a weakly special
subvariety. Recall that $y$ gives a homomorphism
$y:\DT\rightarrow G_{\R}$, and conjugation
by $y(i)$ gives a Cartan involution of $G_{\R}^{\rm ad}$
which preserves the image of $F _{\R}$ in $G_{\R}^{\rm ad}$.
Therefore, it must also preserve $N(F)_{\R}$.
By \cite{SAT} \S I, 4.3, $N(F)$ is reductive. The fact that it is
defined over $\Q$ follows easily from the fact that $F$ is.
\end{proof}

Write ${\rm Com}(N(F))$ for the product of all compact
$\Q$-factors of $N(F)^{\rm ad}$
and let $N(F)_{\rm sh}$ denote the identity
component of the kernel of the
projection $N(F)\rightarrow {\rm Com}(N(F))$.
Then $y:\DT\rightarrow G_{\R}$ factors through
$\phi(H_1\times H_2)$
and hence also through $N(F)_{\rm sh}$. Thus
$N(F)_{\rm sh}(\R)^+\cdot y$
defines a special subvariety of $G(\R)^+\cdot y$.

\begin{lemma}\label{SpFam}
Let $F\in\mathcal{C}$.
Then the set of special subvarieties of $\Hh_g$ of the form
$N(F)_{\rm sh}(\R)^+\cdot y$ such that
$y(\DT)\subset N(F)_{\rm sh,\R}$ is finite as $y$ varies over $\Hh_g$.

\end{lemma}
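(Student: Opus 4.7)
The plan is to parameterize each such special subvariety by the $N(F)_{\rm sh}(\R)^+$-conjugacy class of $y:\DT\to N(F)_{\rm sh,\R}$ and then show that only finitely many such classes arise as $y$ ranges over $\Hh_g$. First I would note that, for two points $y_1,y_2\in\Hh_g$ with $y_i(\DT)\subset N(F)_{\rm sh,\R}$, the orbits $N(F)_{\rm sh}(\R)^+\cdot y_i$ coincide as subsets of $\Hh_g$ iff $y_1$ and $y_2$ lie in a single $N(F)_{\rm sh}(\R)^+$-conjugacy class. Thus the count of distinct special subvarieties equals the count of $N(F)_{\rm sh}(\R)^+$-conjugacy classes of eligible $y$.

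Next I would exploit that $\Hh_g$ is itself a single $G(\R)^+$-conjugacy class of homomorphisms $\DT\to G_\R$ for $G={\rm GSp}_{2g}$. The set of eligible $y$ is therefore the intersection of this $G(\R)$-conjugacy class with the locus of homomorphisms factoring through the reductive $\R$-subgroup $N(F)_{\rm sh,\R}$ (reductivity coming from Lemma 7.2, together with the fact that $N(F)_{\rm sh}$ is the identity component of a quotient of $N(F)$). The decisive input is a standard finiteness principle for reductive $\R$-groups: for any reductive $\R$-subgroup $H\subset G'$, a single $G'(\R)$-conjugacy class of morphisms $\DT\to G'_\R$ meets the locus of morphisms factoring through $H$ in only finitely many $H(\R)$-conjugacy classes. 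This is a consequence of the finiteness of the pointed Galois cohomology $H^1(\R,C)$ for a connected reductive $\R$-group $C$, applied to centralizers. Combined with the fact that $N(F)_{\rm sh}(\R)^+$ has finite index in $N(F)_{\rm sh}(\R)$, this yields the desired finiteness.

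The main obstacle is precisely this Galois-cohomological finiteness step; once it is granted, the remaining assembly is routine bookkeeping. A more concrete alternative route, closer to the Shimura-data language already deployed in \S7, would be to observe that each eligible $N(F)_{\rm sh}(\R)^+$-orbit distinguishes a connected component of the hermitian symmetric domain attached to the Shimura datum $(N(F)_{\rm sh},X_y)$ (where $X_y$ is the conjugacy class of $y$ in $N(F)_{\rm sh}(\R)$), and for the fixed reductive $\Q$-group $N(F)_{\rm sh}$ the number of such components is controlled by its Dynkin data and hence bounded.
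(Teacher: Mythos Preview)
Your proposal is correct, and the opening reduction---identifying the special subvarieties in question with $N(F)_{\rm sh}(\R)^+$-conjugacy classes of admissible $y:\DT\to N(F)_{\rm sh,\R}$---matches the paper's first step. After that the two arguments diverge.

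The paper passes to the adjoint group $N(F)_{\rm sh}^{\rm ad}$ and argues in three stages: (i) the classification of hermitian symmetric domains gives only finitely many isomorphism types of pairs $(N(F)^{\rm ad}_{\rm sh,\R},\,y)$ of bounded dimension, hence finitely many such $y$ up to an automorphism of $N(F)^{\rm ad}_{\rm sh,\R}$; (ii) the outer automorphism group of a semi-simple group is finite, yielding finitely many $y$ at the adjoint level up to inner automorphism; (iii) the Shimura-axiom restriction on the weights of ${\rm ad}\circ y$ acting on ${\rm Lie}\,\Sp_{2g}$ forces only finitely many lifts from $N(F)^{\rm ad}_{\rm sh,\R}$ back to $N(F)_{\rm sh,\R}$ to lie in the fixed $G(\R)^+$-class. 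Your ``alternative route'' in the last paragraph is close in spirit to this.

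Your main route via real Galois cohomology is a genuinely different and somewhat more streamlined argument: it avoids the detour through the adjoint group and the lifting step entirely. One small point to make explicit: finiteness of $H^1(\R,C)$ for the centralizer $C$ bounds the number of $H(\R)$-classes inside a single $H(\C)$-class, but you also need that only finitely many $H(\C)$-conjugacy classes of $\DT\to H$ meet the given $G'(\C)$-class. This follows because a $G'(\C)$-conjugacy class of homomorphisms from a torus is a single (finite) Weyl-group orbit in the cocharacter lattice of a maximal torus of $G'$, and its intersection with the cocharacter lattice of a maximal torus of $H$ is therefore finite. With that addition your argument is complete. What your approach buys is uniformity (one cohomological input); what the paper's approach buys is concreteness tied to the Shimura-datum axioms already in play.
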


\begin{proof}

The requirement is that $y:\DT\rightarrow G_{\R}$ has image in
$N(F)_{\rm sh,\R}$,
and we would like to show that the number of such images that
arise is finite up to $N(F)_{\rm sh}(\R)^+$ conjugacy.
Since the homomorphisms we are interested in correspond to
Shimura varieties,
it means each such homomorphism also defines a
hermitian symmetric domain,
which has as its group of biholomorphisms the
corresponding adjoint real group
$N(F)_{\rm sh,\R}^{\rm ad}$. It is well known that the number of
such domains of a bounded dimension is finite,
hence there are finitely many types
$(N(F)_{\rm sh,\R}^{\rm ad},y:\DT \rightarrow N(F)_{\rm sh,\R}^{\rm ad})$
corresponding to a hermitian
symmetric space
up to an automorphism of
$N(F)_{\rm sh,\R}^{\rm ad}$. Since $N(F)_{\rm sh}^{\rm ad}$
is semi-simple,
its outer automorphism group is finite.
This means that, up to $N(F)_{\rm sh}(\R)^+$ conjugacy,
there are only finitely many
homomorphisms $y:\DT\rightarrow N(F)^{\rm ad}_{\rm sh,\R}$ that arise
from points $y\in\Hh_g$.
But recall that we know that the weights
of the action ${\rm ad}\circ y$ of $\DT$ on the complexified Lie algebra of
$\Sp_{2g}$ are restricted to a finite set. This implies that for each
homomorphism to $N(F)_{\rm sh, \R}^{\rm ad}$,
only finitely many possible lifts to
$N(F)_{\rm sh,\R}$ actually occur in the $G(\R)^+$-conjugacy class.
This completes the proof of the assertion.
\end{proof}

For the proof of Theorem 7.1, it will be useful to make the following definition:

\begin{definition*} For each $F\in \mathcal{C}$,
define $S(F)$ to be the
union of $\pi_g(F(\R)^+\cdot z)$ over the
maximal $F$-orbits $F(\R)^+\cdot z$, which are also
special subvarieties and satisfy
$$
\dim F(\R)^+\cdot z=
\dim\left( F(\R)^+\cdot z \cap\left(\pi_g^{-1}(V)\cap S_G\right)\right).
$$
We let $W(F)$ be the corresponding union over maximal $F$-orbits
which are weakly special.
\end{definition*}

Finally, we need the following result,
which is well known to the experts but for which we
know of no easy reference in the literature:

\begin{lemma}\label{finitedisc}
For any $B>0$, there are only finitely many special points
$x\in\A_{g}$ such that $|{\rm Disc\/}(R_x)|<B$.
\end{lemma}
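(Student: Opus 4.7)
The plan is to control three layers of finite data, extracted successively from the bound $|\Disc(R_x)|<B$: the CM \'etale $\Q$-algebra $E_x:=R_x\otimes_\Z\Q$ containing $R_x$, the order $R_x\subset E_x$, and finally the isomorphism class of the principally polarized abelian variety $A_x$ itself.

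Since $x$ is a special point, $A_x$ is isogenous to a product $\prod_i B_i^{m_i}$ of pairwise non-isogenous simple CM abelian varieties, so $\textrm{End}^0(B_i)$ is a CM field $F_i$ with $[F_i:\Q]=2\dim B_i$, and hence $E_x\cong\prod_i F_i$ has $\Z$-rank $2\sum_i\dim B_i\le 2g$. Because $\Disc(\Oo_{E_x})\mid\Disc(R_x)$ and $\Disc(\Oo_{E_x})=\prod_i\Disc(\Oo_{F_i})$, each factor satisfies $|\Disc(F_i)|<B$. By the Hermite--Minkowski finiteness theorem, only finitely many number fields have degree at most $2g$ and discriminant less than $B$, so $E_x$ lies in a finite list of $\Q$-algebras. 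Fixing such an $E$, any order $R\subset\Oo_E$ satisfies $\Disc(R)=[\Oo_E:R]^2\Disc(\Oo_E)$, so the bound $|\Disc(R_x)|<B$ bounds the conductor index $[\Oo_E:R_x]$; since $\Oo_E$ is a finitely generated free $\Z$-module, only finitely many suborders of bounded index exist.

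It remains to fix $R\subset E=\prod_i F_i$ together with multiplicities $m_i$ such that $\sum m_i\dim B_i=g$ (a finite choice) and show that only finitely many special points $x\in\A_g$ have $Z(\textrm{End}(A_x))=R$ with $A_x$ isogenous to $\prod B_i^{m_i}$. By the classical Shimura--Taniyama classification of CM abelian varieties, the isomorphism classes are parametrised by a CM type on $E$ (finite), an element of the Picard group of $R$ (finite, since orders in number fields have finite Picard groups), and the choice of a compatible principal polarization (finitely many among those that exist). Combining this with the previous paragraph yields the claim.

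The main obstacle is this last step, where one must unpack the Shimura--Taniyama parametrisation and verify that the refinement by principal polarisability cuts down to a still finite set; this is standard in the CM literature but requires care because $R$ need not be maximal. The earlier steps are purely algebraic: Hermite--Minkowski on the number-field side, and the finiteness of subgroups of bounded index in a finitely generated abelian group on the order side.
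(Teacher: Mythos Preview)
Your argument is essentially correct but takes a completely different route from the paper. The paper's proof is two lines: lift $x$ to $Z\in S_g$, invoke Theorem~3.1 of \cite{PT} to get $H(Z)\prec |\Disc(R_x)|$, observe that the coordinates of $Z$ are algebraic of degree at most $2g$, and apply Northcott. Your approach instead unwinds the classical arithmetic of CM abelian varieties: Hermite--Minkowski to bound the CM \'etale algebra, the index formula to bound the order, and Shimura--Taniyama--type classification to bound the polarized isomorphism class. This has the virtue of being self-contained (it does not depend on the height bound from \cite{PT}) and makes the finiteness ``visible'' in terms of class groups and CM types; the paper's proof is much shorter precisely because the height machinery is already in place for other purposes.

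One imprecision in your third step: when some $m_i>1$, the CM algebra acting with degree $2g$ is $\prod_i F_i^{m_i}$, not $E=\prod_i F_i$, and the relevant lattice classes live there rather than in the Picard group of $R\subset E$. So ``Picard group of $R$'' is not quite the right invariant in the non-simple case. The finiteness you want still holds (finitely many CM types on $\prod_i F_i^{m_i}$, finitely many proper ideal classes for any order there, finitely many principal polarizations on each), but the parametrisation needs to be stated for the degree-$2g$ algebra rather than its centre. You flag this step as the main obstacle, which is fair; just be aware that the fix is to work with the full CM algebra, not $R$ alone.
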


\begin{proof}

Let $x$ be such a point and consider its lift $Z$ to the fundamental
domain $S_g$. Then by Theorem 3.1 in \cite{PT}
we know that $H(Z)\prec |{\rm Disc\/}(R_x)|$, where $H(Z)$
is the height of the point $Z$. Since all the co-ordinates
of $Z$ are algebraic of degree at most $2g$,
there are only finitely many such $Z$ by Northcott's Theorem.
This completes the proof.
\end{proof}

We can now prove Theorem 7.1.

\begin{proof} We now assume that Conjecture 7.1 holds for $g$.
The proof is by induction on $\dim V$, therefore we may assume
that $V$ contains a Zariski-dense set of special points,
and we must then show that $V$ is a special subvariety.
Since special points are defined over $\Qa$, we must have that
$V$ is defined over a field $K$ of finite degree over $\Q$.

Let $x$ be a special point in $V$, and let $Z$ be a preimage of $x$
contained in the standard fundamental domain
$S_g$ of $\Sp_{2g}(\Z)$ as in \cite{PT}.
By Theorem 3.1 in \cite{PT}, we know that the height $H(Z)$ of $Z$
satisfies $H(Z)\prec|\Disc(R_x)|$.
The conjugates $x'$ of $x$ over $K$ again lie on $V$, are special points
of the same discriminant, and have pre-images $Z'$ in $S_g$.
Thus by Conjecture 7.1, if $|{\rm Disc\/}(R_x)|$ is sufficiently
large, the Counting Theorem (in the form \cite{PILAOAO}, Theorem 3.2)
implies that ``many'' of the $x'$  (i.e. a positive power of
$|\Disc(R_x)|$ of them) are contained in the
algebraic part (as defined there) of $Y^0$, which thus contains a
semi-algebraic set of positive dimension containing at least
one (in fact many, if we use \cite{PW}, 3.6, but one will suffice)
preimage $Z'$ of a Galois conjugate of $x$ over $K$.
By Ax-Lindemann (the semi-algebraic version: Theorem \ref{ALW})
$Z'$ is contained in a weakly special subvariety
$S\subset \pi^{-1}(V)$ which we may choose
to be maximal, and is in fact special as $x$ is special.
This implies that $\pi_g(S)$ is algebraic,
and hence the union of all Galois conjugates of $\pi_g(S)$
over $K$ contains $x$ and lies inside $V$. Hence, by Lemma
\ref{finitedisc}, all but finitely many special points $x\in V$
are contained in a positive dimensional special subvariety of $V$.

It therefore suffices to establish that there are only finitely many
positive dimensional maximal special subvarieties of $V$,
for then the Zariski-density of special points will imply that
$V$ itself is special.

By Lemma \ref{WeakChar}, any such special subvariety is the image under $\pi_g$ of an orbit of
$F(\R)^+$ for some $F\in \mathcal{C}$.
We first show that, for each $F\in \mathcal{C}$,
the set $S(F)$ is a finite union of special subvarieties.

We first consider the case
where $N(F)_{\rm sh}$ is not equal to all of $G$.
Then by Lemma \ref{SpFam}, special subvarieties of $V$ which are
images under $\pi_g$ of orbits of $F(\R)^+$ lie in the union of finitely many special subvarieties whose generic Mumford-Tate group $M$
satisfies $M^{\rm der}=N(F)_{\rm sh}^{\rm der}$. But the smallest
special subvariety containing $V$
has generic Mumford-Tate group $G$, and so the intersections
of these special subvarieties with $V$ must all be proper.
Hence we are done in this case by induction on the dimension of $V$.

Now consider the case that $N(F)_{\rm sh}$ is equal to $G$.
This means that $F$
is normal in $G$, and so $G$ splits as an almost direct product
$G=F\cdot Z_G(F)$.
Let $y$ be a point in $\pi_g^{-1}(V)\cap S_G$, so that $G(\R)^+\cdot y$
is a hermitian symmetric space with $\pi_g(G(\R)^+\cdot y)$ the
minimal special subvariety containing $V$.
Then one has a splitting of the hermitian symmetric space
$G(\R)^+\cdot y$ as
$F(\R)^+\cdot y\times Z_G(F)(\R)^+\cdot y$.

Let $z_{CM}\in S_G$ be a special point. Then
$F(\R)^+\cdot y\cap Z_G(F)(\R)^+\cdot z_{CM}$ consists of a
single point, and moreover
this point is special if and only if $F(\R)^+\cdot y$ is a
special subvariety, if and only if
$F(\R)^+\cdot y$ contains special points
(see \cite{MOONEN}, Theorem 4.3).
Thus special subvarieties of the form $F(\R)^+\cdot y$ are in bijection
with special points on $Z_G(F)(\R)^+\cdot z_{CM}$.
Moreover, if we let $X_F^+, X_G^+$, and $X_{Z_G(F)}^+$ denote the
hermitian symmetric spaces  corresponding respectively to $F,G$
and $Z_G(F)$, then we get a finite
algebraic map of Shimura varieties
$$\psi:F(\Z)^+\backslash X_F^+\times Z_G(F)^+(\Z)\backslash
X_{Z_G(F)}^+\rightarrow G(\Z)^+\backslash X_G^+.$$
Put $W= \pi_g(Z_G(F)(\R)^+\cdot z_{CM})$ and let
$$
V'=\{x\in W\mid x\in V\textrm{ \& }
F(\Z)^+\backslash X_{F}^+\times{\rm pr}_2\psi^{-1}\{x\}\subset
\psi^{-1}(V)\}.
$$
Then $V'$ is a closed algebraic subvariety of $\A_g$ of dimension
lower than $V$ (as the orbits of $F(\R)^+$ have positive dimension),
and special points in $V'$ correspond
in a finite-to-one fashion to special subvarieties of the form
$\pi_g(F(\R)^+\cdot y)$ in $V$.

By induction on dimension the conclusion of 7.1 is true for
$V'$ and thus the set of special points in $V'$
is contained in a finite union of special subvarieties of $V'$.
Hence, the images of \emph{maximal} orbits $\pi_g(F(\R)^+\cdot y)$
correspond to the finite set of isolated special points in $V'$.
Thus we have finiteness also in this case.

\begin{lemma}\label{CountVar}
The union of all positive dimensional
weakly special subvarieties of $V$ is a countable union of algebraic varieties.
\end{lemma}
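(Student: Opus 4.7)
The plan is to enumerate the positive-dimensional weakly special subvarieties of $V$ by the Shimura subdata of $(G, X_G)$ that give rise to them, and to show that for each choice of Shimura subdatum with a splitting the resulting family contributes a single algebraic subvariety of $V$. Since such Shimura subdata with splittings form a countable set, the lemma will then follow.

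By definition, every positive-dimensional weakly special subvariety of $V$ is of the form $\pi_g(X_1^+ \times \{y_2\})$, where $(H, X_H) \subset (G, X_G)$ is a Shimura subdatum with a splitting $(H^{\mathrm{ad}}, X_H^+) = (H_1, X_1^+) \times (H_2, X_2^+)$ and $y_2 \in X_2^+$. There are only countably many such Shimura subdata with splittings: each $H$ is determined by its Lie algebra, a $\Q$-Lie subalgebra of $\mathrm{Lie}(G)$, and $\Q$-subspaces of a finite-dimensional $\Q$-vector space form a countable collection; for each such $H$ there are only finitely many splittings of $H^{\mathrm{ad}}$.

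Now fix one such Shimura subdatum with splitting. The associated connected Shimura variety $\Gamma_H \backslash X_H^+$ carries a finite algebraic morphism $\psi$ to $\A_g$ and an algebraic projection $p$ to $T_2 := \Gamma_2 \backslash X_2^+$ whose fibers are (the images in $\Gamma_H \backslash X_H^+$ of) the weakly special subvarieties of this type. Set
$$
T_2^V := \{z \in T_2 : \psi(p^{-1}(z)) \subset V\}.
$$
Since $\psi^{-1}(V)$ is a closed algebraic subvariety of $\Gamma_H \backslash X_H^+$ and the fibers of $p$ are irreducible of constant dimension, upper semi-continuity of fiber dimension applied to the restriction $p|_{\psi^{-1}(V)}$ shows that $T_2^V$ is Zariski closed in $T_2$. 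Hence $\psi(p^{-1}(T_2^V))$ is a closed algebraic subvariety of $\A_g$ contained in $V$ (using that $\psi$ is finite), and by construction it equals the union of weakly special subvarieties of $V$ coming from this subdatum.

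Summing these contributions over the countably many Shimura subdata with splitting gives the required countable union of algebraic subvarieties containing all positive-dimensional weakly special subvarieties of $V$. The main obstacle is verifying that $T_2^V$ is Zariski closed; once this is handled via upper semi-continuity of fibre dimension, the rest is routine bookkeeping over the countable parameter set.
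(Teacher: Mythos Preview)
Your proof is correct and takes essentially the same approach as the paper: enumerate the countably many Shimura subdata with splittings, and for each fixed one show that the parameter locus $T_2^V\subset\Gamma_2\backslash X_2^+$ of points whose associated weakly special subvariety lies in $V$ is Zariski closed. The paper simply asserts this last point (``clearly an algebraic variety''), whereas you justify it via upper semi-continuity of fibre dimension---which works, though strictly speaking closedness in the \emph{target} also uses that the projection $p$ is open (it is smooth, hence flat), since Chevalley's semi-continuity is on the source.
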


\begin{proof}
Since there are only countably many $\Q$-subgroups of $G$, 
we need only consider those weakly special subvarieties arising 
from a single Shimura subdatum $(H,X_H)\subset (G,X_G)$ 
and a splitting 
$$(H^{\rm ad},X^{\rm ad,+}_H) = (H_1,X_1^+)\times(H_2,X_2^+).
$$ 
We then get an algebraic
morphism 
$$
\Gamma_1\backslash X_1^+ \times 
\Gamma_2\backslash X_2^+\rightarrow G(\Z)^+\backslash X_G^+
$$ 
where $\Gamma_1\subset H_1$ and $\Gamma_2\subset H_2$ 
are appropriate arithmetic subgroups.
Then the set of points $y\in\Gamma_2\backslash X_2^+$ 
such that $X_1^+\times\{y\}$ maps inside of $V$ is clearly 
an algebraic variety. This completes the proof.
\end{proof}

From Lemma \ref{CountVar} it follows that   $\bigcup_{F\in\mathcal{C}}W(F)$ is a
countable union of  algebraic varieties.
We state two further lemmas (which do not require the assumption of
Conjecture 7.1).

\begin{lemma}\label{defweak}
The union of all weakly special subvarieties of $V$ of positive
dimension is a definable subset of $\A_g(\C)$.
\end{lemma}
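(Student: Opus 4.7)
The plan is to identify $A$, the union of positive-dimensional weakly special subvarieties of $V$, with the $\pi_g$-image of a definable ``algebraic part'' of a suitable definable set. Let $Y:=\pi_g^{-1}(V)\cap S_g$, which is definable in $\R_{\rm an,\ exp}$ by Theorem \ref{define} (since $V$ is Zariski closed in $\A_g$). Define the \emph{algebraic part} $Y^{\mathrm{alg}}$ of $Y$ to be the union of all connected, positive-dimensional, semi-algebraic subsets of $Y$. The goal is to show $A=\pi_g(Y^{\mathrm{alg}})$; definability of $A$ then follows from definability of $Y^{\mathrm{alg}}$ and of $\pi_g|_{S_g}$.

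The identity $A=\pi_g(Y^{\mathrm{alg}})$ rests on two observations, each invoking one ingredient of the paper. First, for any positive-dimensional weakly special $S\subset V$, Lemma \ref{UYS} gives that $\pi_g^{-1}(S)$ is a union of complex algebraic subvarieties of $\Hh_g$; intersecting with the semi-algebraic set $S_g$ yields a semi-algebraic subset of $Y$ of positive real dimension, which thus lies in $Y^{\mathrm{alg}}$, and applying $\pi_g$ gives $S\subset\pi_g(Y^{\mathrm{alg}})$. Second, given any connected, positive-dimensional, semi-algebraic $T\subset Y$, the semi-algebraic form of Ax-Lindemann (Theorem \ref{ALW}) places $T$ inside $\pi_g^{-1}(S)$ for some weakly special $S\subset V$; since $\pi_g$ has discrete fibres on $\Hh_g$, the image $\pi_g(T)$ has the same positive real dimension as $T$, forcing $S$ to have positive complex dimension, so $\pi_g(T)\subset A$.

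Granting the identity, definability of $A$ reduces to definability of $Y^{\mathrm{alg}}$. The main technical input is therefore the fact that the algebraic part of a definable set in the o-minimal structure $\R_{\rm an,\ exp}$ is itself definable, a standard but non-elementary result in o-minimal geometry that follows from cell decomposition together with uniform complexity bounds for the semi-algebraic members of a definable family. This definability statement is the principal obstacle; once it is invoked, the rest of the proof is merely the translation between semi-algebraic subsets of $\pi_g^{-1}(V)$ and weakly special subvarieties of $V$ supplied by Ax-Lindemann.
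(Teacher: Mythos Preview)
Your argument has a genuine gap: the assertion that the algebraic part $Y^{\mathrm{alg}}$ of a definable set $Y$ is itself definable is \emph{false} in general, and there is no ``standard'' result of the kind you invoke. A clean counterexample in $\R_{\rm an,\ exp}$ (indeed already in $\R_{\exp}$) is the surface
\[
Y=\{(x,y,z)\in\R^3 : x>0,\ z>0,\ y=x^{z}\}=\{(x,\exp(z\log x),z)\}.
\]
A short transcendence argument (using Ax's theorem) shows that the connected positive-dimensional semi-algebraic subsets of $Y$ are exactly the pieces of the slices $z=p/q\in\Q_{>0}$ (where $y^{q}=x^{p}$) together with the line $\{x=1,\ y=1\}$. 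Hence $Y^{\mathrm{alg}}\cap\{x=2\}$ projects onto $\Q_{>0}$ in the $z$-coordinate, which is not definable in any o-minimal structure. The ``uniform complexity bounds'' you allude to concern the semi-algebraic \emph{fibres} of a definable family, not arbitrary semi-algebraic subsets of a fixed definable set; in the example above the relevant curves $y^{q}=x^{p}$ have unbounded degree, so no such bound can exist.

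Your identification $A=\pi_g(Y^{\mathrm{alg}})$ via Lemma~\ref{UYS} and Theorem~\ref{ALW} is essentially correct (modulo a routine check at boundary points of $S_g$), but it does not by itself yield definability of $A$. The paper's proof avoids the problem precisely by \emph{not} passing through $Y^{\mathrm{alg}}$: instead it uses the structural description of weakly special subvarieties as orbits $H(\R)^{+}\cdot z$ of semi-simple subgroups, together with the fact (\cite{EMV}, A.1) that semi-simple subgroups of $\Sp_{2g,\R}$ fall into finitely many conjugacy classes. This produces an explicit definable parametrisation of the maximal orbits and hence of their union. In other words, Ax--Lindemann is not enough on its own; one must combine it with the group-theoretic finiteness to replace the a~priori unbounded family of semi-algebraic pieces by a definable family.
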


\begin{proof}

Let $T\subset V$ be weakly special.
Since $S_G$ contains a fundamental domain for the
action of $G(\Z)$, every point of $T$
is contained in an image of the form $\pi_g(H(\R)^+\cdot z\cap S_G)$
for $H$ a semi-simple subgroup of $G$ and $z\in S_G$.
There are only finitely many semi-simple real groups that embed
into ${\rm Sp}_{2g,\R}$, and the embeddings come in finitely many
families
up to conjugacy by (\cite{EMV}, A.1).
We restrict for the moment to a particular
semi-simple subgroup $H\subset G_{\R}$. For each $y\in\ G(\R)$ let
$H_y:=yHy^{-1}$. Since the dimension of orbits
is definable, it follows that the set
$$
T_H:=\{(y,z)\in \Sp_{2g}(\R)\times S_G:
H_y(\R)^+\cdot z{\rm\ is\ maximal}\}
$$
is definable (the maximality is definable as there are only finitely many
conjugacy classes of groups to compare with),
and the union of the
$$
\pi_g(H_y(\R)^+\cdot z\cap S_G)
$$
over $(y,z)\in T_H$ is thus also definable. Taking the union over
the finitely many conjugacy
classes of semi-simple groups $H$ proves the result.
\end{proof}

\begin{lemma}
Let $A$ be a quasiprojective complex algebraic variety.
Suppose $V_i$ is a countable sequence of closed complex algebraic
subvarieties of $A$ whose union is definable. Then there is a natural
number $m$ such that
$$
\bigcup_{i\in\mathbb{N}} V_i = V_1\cup V_2\cup\cdots \cup V_m.
$$
\end{lemma}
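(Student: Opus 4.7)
The plan is to induct on $d := \dim_{\R}(\bigcup_i V_i)$, first reducing to the case where the $V_i$ are distinct and irreducible by decomposing each into irreducible components and discarding duplicates. The base case $d = 0$ is immediate: the union $X := \bigcup_i V_i$ is a zero-dimensional definable subset of $A$, hence finite by o-minimality of $\R_{\rm an,\ exp}$, and only finitely many distinct $V_i$ can fit inside a finite set.

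For the inductive step with $d = 2k > 0$, the main work is to show that only finitely many $V_i$ have complex dimension $k$. I would take $X^{\rm reg}$ to be the definable locus where $X$ is locally a real $2k$-manifold, which has finitely many connected components $C_1, \ldots, C_N$. At any $x \in C_j$, the local manifold condition forces exactly one top-dimensional $V_i$ to pass through $x$ and to equal $X$ locally there (if two distinct top-dimensional germs passed, their union would fail to be a manifold along the lower-dimensional intersection). By overlap and irreducibility this $V_i$ is locally constant on the connected $C_j$, giving a unique $V_{i_j}$ containing $C_j$ globally; any other top-dimensional $V_i$ meets $C_j$ inside $V_i \cap V_{i_j}$, of complex dimension strictly less than $k$, hence nowhere dense in $C_j$. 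I would then apply Baire category on the irreducible variety $V_i$ itself, removing the countable family $\bigcup_{j \neq i} V_j$ of proper closed subvarieties, to produce a point of $V_i$ that is smooth in $X$ of real dimension $2k$; this point lies in some $C_j \subset V_{i_j}$, forcing $V_i = V_{i_j}$.

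Setting $Z := V_{i_1} \cup \cdots \cup V_{i_N}$, a closed algebraic subvariety of $A$, the plan is then to pass to the quasiprojective variety $A' := A \setminus Z$ together with the definable set $X' := X \setminus Z = \bigcup_i (V_i \cap A')$, each $V_i \cap A'$ being a closed algebraic subvariety of $A'$. Since $X^{\rm reg} \subset Z$, we have $\dim_{\R} X' < 2k$, so the inductive hypothesis applies and yields a finite subfamily whose union is $X'$. Combining with $Z$ produces the desired $m$.

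The hard part will be justifying the unique top-dimensional germ at each $x \in C_j$ and the resulting nowhere-dense dichotomy for $V_i \cap C_j$. The key input is that two distinct irreducible complex algebraic subvarieties of the same complex dimension $k$ intersect in a set of strictly smaller complex dimension, so their set-theoretic union cannot be a real $2k$-manifold along that intersection; consequently a smooth $2k$-point of $X$ must lie on a single top-dimensional germ, and this germ extends uniquely along the connected $C_j$ to the algebraic subvariety $V_{i_j}$.
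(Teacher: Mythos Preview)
Your approach is sound but takes a substantially longer route than the paper's. The paper's proof is three lines: take an analytic cell decomposition $X=\bigcup_j U_j$ of the definable set $X$ (available in $\R_{\rm an,\ exp}$ by \cite{DM}); for each cell $U_j$, a Baire-type argument produces some $V_i$ with $\dim(V_i\cap U_j)=\dim U_j$; since $U_j$ is a connected real-analytic manifold and $V_i$ is cut out by polynomials, the identity principle gives $U_j\subset V_i$, and one is done. You instead induct on $\dim_\R X$, peel off the top-dimensional $V_i$ by matching them with the finitely many connected components of the regular locus $X^{\rm reg}$, and recurse on the complement. What this buys you is independence from analytic cell decomposition---your argument would run in any o-minimal expansion of $\R$---but at the price of a step you somewhat underplay. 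To get a single $V_{i_j}$ containing all of the connected $C_j$ you need $V_i\cap C_j$ to be \emph{open} in $C_j$ whenever $V_i$ is top-dimensional and meets $C_j$, and your parenthetical (``their union would fail to be a manifold'') only handles the collision of two \emph{smooth} top-dimensional germs; it does not by itself exclude the possibility that $x\in C_j$ is a singular point of the unique top-dimensional $V_i$ through it, nor that $x$ lies only on lower-dimensional $V_i$'s. The missing ingredient is that a pure $k$-dimensional complex-analytic germ singular at $x$ cannot sit inside a $C^1$ real $2k$-submanifold near $x$; granting this, every $x\in C_j$ lying on a top-dimensional $V_i$ is automatically a smooth point of $V_i$, invariance of domain gives openness, and your local-constancy and Baire arguments go through cleanly. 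The paper sidesteps all of this by exploiting the analyticity of the cells directly.
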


\begin{proof}

Let $X=\bigcup_{i\in\mathbb{N}} V_i$. Since $X$ is definable
(in $\R_{\rm an\ exp}$)
it has an analytic cell decomposition (\cite{DM}),
$X=\cup_{j} U_j$. For each $j$, there is
an $i$ such that $U_j\cap V_i$ has the same dimension as $U_j$.
But since $U_j$ is
a connected real analytic variety, if a polynomial vanishes
on an open set of $U_j$
it must vanish on $U_j$ identically. Hence $U_j\subset V_i$.
\end{proof}

By applying the preceding lemmas we see that there is a finite set
$\mathcal{V}\subset\mathcal{C}$ such that
the union of all maximal weakly special subvarieties of $V$
of positive dimension is
$\bigcup_{F\in\mathcal{V}} W(F).$
Consider a positive dimensional special subvariety $T\subset V$.
We have $T\subset \bigcup_{F\in\mathcal{V}} W(F)$. Now special
points are Zariski-dense in $T$, but the special points in each $W(F)$
are contained in $S(F)$. Thus
$T\subset \bigcup_{F\in\mathcal{V}} S(F)$. We conclude that
all maximal special subvarieties of $V$ of positive dimension
are contained in the finite union $\bigcup_{F\in\mathcal{V}} S(F)$.
Hence the theorem is true for $V$.
\end{proof}
\bigskip
\bigbreak

\bigskip
\bigskip

\bigskip
\bigskip


\begin{thebibliography}{10}

\bibitem{ANDRE}
Y. Andr\'e, {\it $G$-functions and Geometry,\/}
Aspects of Mathematics E13, Vieweg, Braunschweig, 1989.

\bibitem{A}
Y.Andr\'e, Mumford-Tate groups of mixed Hodge structures
and the theorem of the fixed part.
{\it Compositio Math.} {\bf 82} 1992, no. 1, 1-24.

\bibitem{AA} Y. Andr\'e,
Finititude des couples d'invariants modulaires
singuliers sur une courbe alg\'ebrique plane non modulaire,
{\it J. Reine  Angew. Math.} {\bf 505} (1998), 203--208.

\bibitem{AX}
J. Ax, On Schanuel's
conjectures, {\it Annals of Mathematics\/} {\bf 93\/} (1971), 252--268.

\bibitem{BILUMZ}
Y. Bilu, D. Masser, and U. Zannier,
An effective ``theorem of Andr\'e'' for $CM$-points on a plane curve
{\it Math. Proc. Cambridge Philos. Soc.\/} {\bf 154} (2013),
145--152.

%\bibitem{BOREL}
%A. Borel,
%Density properties for certain subgroups of semi-simple groups
%without compact components,
%{\it Annals Math.\/} {\bf 72} (1960), 179--188.

\bibitem{CM} A. Borel, S. Chowla, C. S. Herz, K. Iwasawa
and J.-P. Serre,
{\it Seminar on complex multiplication,} (1957-8),
Lecture Notes in Mathematics {\bf 21\/}, Springer, 1966.

\bibitem{BH}
A. Borel and Harish-Chandra, Arithmetic subgroups of algebraic groups,
{\it Annals of Mathematics\/} {\bf 75} (1962), 485-535.

\bibitem{CHAI}
C.-L. Chai,
{\it Compactification of Siegel Moduli Schemes,\/}
LMS Lecture Note Series {\bf 107}, CUP, 1985.

\bibitem{DAWYAFAEV}
C. Daw and A. Yafaev,
An unconditional proof of the Andr\'e-Oort conjecture for
Hilbert modular surfaces,
{\it Manuscripta Math.\/} {\bf 135} (2011), 263-271.

\bibitem{D1}
P. Deligne,
{\em Travaux de Shimura}, S\'{e}minaire Bourbaki,
Expos\'{e} 389, Fevrier 1971, {\it Lecture notes in Maths.} 244,
Springer-Verlag, Berlin 1971, p.123-165.

\bibitem{D2}
P. Deligne,
{Vari\'{e}t\'{e}s de Shimura: interpr\'{e}tation modulaire et techniques de
construction de mode\'{e}les canoniques,}
{\it Automorphic Forms, Representations, and $L$-functions} Part. 2;
Proc. of Symp. in Pure Math. 33,
American Mathematical Society 1979,  p.274-290.

\bibitem{DRIES}
L. van den Dries, {\it Tame Topology and O-minimal Structures,\/}
LMS Lecture Note Series {\bf 248}, CUP, 1998.

\bibitem{DMM}
L. van den Dries, A. Macintyre, D. Marker, The
elementary theory of restricted analytic fields with exponentiation,
{\it Annals of Mathematics\/} {\bf 140} (1994), 183--205.

\bibitem{DM}
L. van den Dries and C. Miller, On the real exponential field with
restricted analytic functions, {\it Israel J. Math.} {\bf 85} (1994), 19--56.

\bibitem{EMO}
S.J. Edixhoven, B.J.J. Moonen, and F. Oort, editors,
Open problems in algebraic geometry,
{\it Bull. Sci. Math.\/} {\bf 125} (2001), 1--22.

\bibitem{EMV}
M. Einseidler, G. Margulis, and A. Venkatesh,
Effective equidistribution for closed orbits of semi-simple groups on
homogeneous spaces, {\it Inventiones\/} {\bf 177} (2009), 137--212.

\bibitem{EGH} P. Erd\"os, P.M. Gruber, J. Hammer,
{\it Lattice Points,\/}
Longman Scientific \& Technical, New York, 1989.

\bibitem{GV}
A. Galligo and N. Vorobjov,
Complexity of finding irreducible components of a semi-algebraic set,
{\it J. Complexity} {\bf 11} (1995), 174--193.

\bibitem{GEER}
G. van der Geer, Siegel modular forms and their applications, in
{\it The 1-2-3 of Modular Forms,}
Brunier, van der Geer, Harder, and Zagier, editors,
Springer-Verlag, Berlin, 2008.

\bibitem{HP}
P. Habegger and J. Pila,
Some unlikely intersections beyond Andr\'e-Oort,
{\it Compositio\/} {\bf 148} (2012), 1--27.

\bibitem{IGUSA}
 J.-I. Igusa, {\it Theta Functions,\/} Grund. math. Wissen.
{\bf 194}, Springer-Verlag, Berlin, 1972.

\bibitem{KYAO}
B. Klingler and A. Yafaev, The Andr\'e-Oort  conjecture,
arXiv preprint.

\bibitem{K}
A.Krieg, Primitive minima of positive definite quadratic forms,
{\it Acta Arithmetica} {\bf LXIII.1}, 1993, 91--96

\bibitem{KUHNE}
L. K\"uhne,
An effective result of Andr\'e-Oort type,
{\it Annals of Mathematics\/} {\bf 176} (2012), 651--671.

\bibitem{LANDAU} E. Landau, Bemerkung zum Heilbronnschen Satz,
{\it Acta Arithmetica\/} {\bf 1} (1935), 2--18.

\bibitem{LANG}
S. Lang, {\it Introduction to Transcendental Numbers,\/}
Addisson-Wesley, Reading, MA, 1966.

\bibitem{MZ}
D. Masser and U. Zannier,
Torsion anomalous points and families of elliptic curves,
{\it Amer. J. Math.\/} {\bf 132} (2010), 1677--1691.

\bibitem{MOONEN}
B. J. J. Moonen
Linearity properties of Shimura varieties. I,
{\it J. Algebraic Geom.\/} {\bf 7} (1998), 539--567.

\bibitem{OORT}
F. Oort, Canonical lifts and dense sets of CM points,
{\it Arithmetic Geometry, Cortona, 1994,\/} 228--234, F. Catanese,
editor, Symposia. Math., XXXVII, CUP, 1997.

\bibitem{PEST}
Y. Peterzil and S. Starchenko, Definability of restricted theta
functions and families of abelian varieties,
{\it Duke. Math. J.\/} {\bf 162} (2013), 731--765.

\bibitem{PILAOAO}
J. Pila, O-minimality and the Andr\'e-Oort conjecture for
$\C^n$, {\it Annals of Mathematics\/} {\bf 173} (2011), 1779--1840.

\bibitem{PT}
J. Pila and J. Tsimerman,
The Andr\'{e}-Oort conjecture for the moduli space of abelian surfaces,
{\it Compositio\/} {\bf 149} (2013), 204--216.

\bibitem{PW}
J. Pila and A. J. Wilkie, The rational points of a definable set,
{\it Duke Math. J.\/} {\bf 133} (2006), 591--616.

\bibitem{PZ}
J. Pila and U. Zannier, Rational points in periodic analytic sets and the
Manin-Mumford conjecture,
{\it Rend. Lincei Mat. Appl.} {\bf 19} (2008), 149--162.

\bibitem{P}
R. Pink, A combination of the conjectures of Mordell-Lang
and Andr\'e-Oort,
{\it Geometric Methods in Algebra and Number Theory,
Progress in Mathematics}
\textbf{235}, 2005, 251--282

\bibitem{PR}
V. Platonov and A. Rapinchuk,
{\it Algebraic Groups and Number Theory,\/}
Pure and Applied Mathematics {\bf 139},
Academic Press, Boston, 1994.

\bibitem{SANSUC}
J.J. Sansuc,
Groupe de Brauer et arithm\'etique des groupes alg\'ebriques
lin\'eaires sur un
corps de nombres, {\it Crelle} {\bf 327} (1981), 12--80.

\bibitem{SAT}
I. Satake, {\it Algebraic Structures of Symmetric Domains,\/}
Vol. 4 of Kano Memorial Lectures, Iwanami Shoten, Tokyo, 1980.

\bibitem{SIEGELAA} C.-L. Siegel, Uber die Classenzahl
quadratischer Zahlkorper, {\it Acta Arithmetica} {\bf 1} (1935), 83--86.

\bibitem{SIEGEL}
C.-L. Siegel,
{\it Symplectic Geometry,\/} Academic Press, New York - London, 1964.

\bibitem{JT}
J. Tsimerman, Brauer-Siegel for arithmetic tori and lower bounds for
Galois orbits of special points, {\it J. Amer. Math. Soc.}
{\bf 25} (2012), 1091--1117.

\bibitem{ULLMO} E. Ullmo, Quelques applications du th\'eor\`eme de
Ax-Lindemann hyperbolique, preprint, 2012, {\it Compositio,\/}
to appear.

\bibitem{UYAO}
E. Ullmo and A. Yafaev, Galois orbits and equidistribution of special
subvarieties: towards the Andr\'e-Oort conjecture,
arXiv preprint.

\bibitem{UYS}
E. Ullmo and A. Yafaev, A characterization of special subvarieties,
{\it Mathematika\/} {\bf 57} (2011), 263--273.

\bibitem{UYGB}
E. Ullmo and A. Yafaev,
Nombres de classes des tores de multiplication complexe et
bornes inf\'erieures pour orbites Galoisiennes de points sp\'eciaux,
arXiv preprint, 2011.

\bibitem{UYHYPERBOLIC}
E. Ullmo and A. Yafaev,
Hyperbolic Ax-Lindemann theorem in the cocompact case,
arXiv preprint, 2011, {\it Duke Math. J.,\/} to appear.

\bibitem{WILKIE}
A. J. Wilkie, Model completeness results for
expansions of the ordered field of real numbers by restricted
Pfaffian functions and the exponential function, {\it J.
Amer. M. Soc.\/} {\bf 9} (1996), 1051--1094.

\end{thebibliography}
\end{document}